\documentclass[12pt]{amsart}
\usepackage{amscd,amsmath,amsthm,amssymb, xcolor}

%
%
%
\def\NZQ{\mathbb}               

\def\ZZ{{\NZQ Z}}

%
%
\def\frk{\mathfrak}               

\def\mm{{\frk m}}

%

\def\G{{\mathcal G}}

%

%
\def\opn#1#2{\def#1{\operatorname{#2}}} 
%
\opn\chara{char} \opn\length{\ell} \opn\pd{pd} \opn\rk{rk}
\opn\projdim{proj\,dim} \opn\injdim{inj\,dim} \opn\rank{rank}
\opn\depth{depth} \opn\grade{grade} \opn\height{height}
\opn\embdim{emb\,dim} \opn\codim{codim}

\opn\Tr{Tr} \opn\bigrank{big\,rank}
\opn\superheight{superheight}\opn\lcm{lcm}
\opn\trdeg{tr\,deg}
\opn\reg{reg} \opn\lreg{lreg} \opn\ini{in} \opn\lpd{lpd}
\opn\size{size} \opn\sdepth{sdepth}
\opn\link{link}\opn\fdepth{fdepth}\opn\lex{lex}
\opn\tr{tr}
\opn\type{type}
\opn\gap{gap}
\opn\arithdeg{arith-deg}
\opn\astab{astab}
\opn\dstab{dstab}
\opn\bigheight{bigheight}
\opn\facets{Facets}
%
\opn\div{div} \opn\Div{Div} \opn\cl{cl} \opn\Cl{Cl}
%
%
\opn\Spec{Spec} \opn\Supp{Supp} \opn\supp{supp} \opn\Sing{Sing}
\opn\Ass{Ass} \opn\Min{Min}\opn\Mon{Mon}
%
%
\opn\Ann{Ann} \opn\Rad{Rad} \opn\Soc{Soc}
%
%
\opn\Im{Im} \opn\Ker{Ker} \opn\Coker{Coker} \opn\Am{Am}
\opn\Hom{Hom} \opn\Tor{Tor} \opn\Ext{Ext} \opn\End{End}
\opn\Aut{Aut} \opn\id{id}

\opn\nat{nat}
\opn\pff{pf}
\opn\Pf{Pf} \opn\GL{GL} \opn\SL{SL} \opn\mod{mod} \opn\ord{ord}
\opn\Gin{Gin} \opn\Hilb{Hilb}\opn\sort{sort}
\opn\PF{PF}\opn\Ap{Ap}
\opn\mult{mult}
%
%
\opn\aff{aff}
\opn\relint{relint} \opn\st{st}
\opn\lk{lk} \opn\cn{cn} \opn\core{core} \opn\vol{vol}  \opn\inp{inp} \opn\nilpot{nilpot}
\opn\link{link} \opn\star{star}\opn\lex{lex}\opn\set{set}
\opn\width{wd}
\opn\Fr{F}
\opn\QF{QF}
\opn\G{G}
\opn\type{type}\opn\res{res}
\opn\conv{conv}
\opn\gr{gr}

%
%

\def\pot#1#2{#1[\kern-0.28ex[#2]\kern-0.28ex]}

%
%
\opn\dirlim{\underrightarrow{\lim}}
\opn\inivlim{\underleftarrow{\lim}}
%
%
%
\let\union=\cup
\let\sect=\cap

%
%
\let\to=\rightarrow

\def\Implies{\ifmmode\Longrightarrow \else
	\unskip${}\Longrightarrow{}$\ignorespaces\fi}
\def\implies{\ifmmode\Rightarrow \else
	\unskip${}\Rightarrow{}$\ignorespaces\fi}
\def\iff{\ifmmode\Longleftrightarrow \else
	\unskip${}\Longleftrightarrow{}$\ignorespaces\fi}

\let\:=\colon
\newtheorem{Theorem}{Theorem}[section]
\newtheorem{Lemma}[Theorem]{Lemma}
\newtheorem{Corollary}[Theorem]{Corollary}
\newtheorem{Proposition}[Theorem]{Proposition}

\newtheorem{Example}[Theorem]{Example}
\newtheorem{Examples}[Theorem]{Examples}

%
%
\usepackage{amsthm}
\newtheorem*{conjecture}{Conjecture}
\newtheorem*{corollary}{Corollary}
\newtheorem*{theorem}{Theorem}

\newtheorem*{problem}{Problem}

\let\epsilon\varepsilon
\let\kappa=\varkappa
%
%
\textwidth=15cm \textheight=22cm \topmargin=0.5cm
\oddsidemargin=0.5cm \evensidemargin=0.5cm \pagestyle{plain}
%
%
\def\qed{\ifhmode\textqed\fi
	\ifmmode\ifinner\quad\qedsymbol\else\dispqed\fi\fi}
\def\textqed{\unskip\nobreak\penalty50
	\hskip2em\hbox{}\nobreak\hfil\qedsymbol
	\parfillskip=0pt \finalhyphendemerits=0}
\def\dispqed{\rlap{\qquad\qedsymbol}}

%
\opn\dis{dis}
\def\pnt{{\raise0.5mm\hbox{\large\bf.}}}

\opn\Lex{Lex}




\begin{document}
	
	\title {The normalized depth function  of squarefree powers}
	
	\author {Nursel Erey, J\"urgen Herzog, Takayuki Hibi  and Sara Saeedi Madani}
	
	\address{Nursel Erey, Gebze Technical University, Department of Mathematics, 41400 Gebze, Kocaeli, Turkey}
	\email{nurselerey@gtu.edu.tr}

	\address{J\"urgen Herzog, Fachbereich Mathematik, Universit\"at Duisburg-Essen, Campus Essen, 45117
		Essen, Germany}
	\email{juergen.herzog@uni-essen.de}

	\address{Takayuki Hibi, Department of Pure and Applied Mathematics, Graduate School of Information Science and Technology, Osaka University, Suita, Osaka 565--0871, Japan}
	\email{hibi@math.sci.osaka-u.ac.jp}

	\address{Sara Saeedi Madani, Department of Mathematics and Computer Science, Amirkabir University of Technology (Tehran Polytechnic), Iran, and School of Mathematics, Institute for Research in Fundamental Sciences (IPM), Tehran, Iran}
	\email{sarasaeedi@aut.ac.ir}

	\address{}
	
	\address{}

	\dedicatory{ }
	
	\begin{abstract}
		The depth of squarefree powers of a squarefree monomial ideal is introduced.  Let $I$ be a squarefree monomial ideal of the polynomial ring $S=K[x_1,\ldots,x_n]$.  The $k$-th squarefree power $I^{[k]}$ of $I$ is the ideal of $S$ generated by those squarefree monomials $u_1\cdots u_k$ with each $u_i\in G(I)$, where $G(I)$ is the unique minimal system of monomial generators of $I$.  Let $d_k$ denote the minimum degree of monomials belonging to $G(I^{[k]})$.  One has $\depth(S/I^{[k]}) \geq d_k -1$.  Setting $g_I(k) = \depth(S/I^{[k]}) - (d_k - 1)$, one calls $g_I(k)$ the normalized depth function of $I$.  The computational experience strongly invites us to propose the conjecture that the normalized depth function is nonincreasing.  In the present paper, especially the normalized depth function of the edge ideal of a finite simple graph is deeply studied.
		
	\end{abstract}
	
	\thanks{}
	
	\subjclass[2010]{13C15, 05E40, 05C70}
	
	
\keywords{Normalized depth function, squarefree powers, matchings, edge ideals}
	
	\maketitle
	
	\setcounter{tocdepth}{1}
	
	\section*{Introduction}
Let $K$ be a field,  and let $S=K[x_1,\ldots,x_n]$ be the polynomial ring in $n$ indeterminates over $K$.  The depth function of a homogeneous ideal $I$ is the integer valued function $f_I(k)=\depth (S/I^k)$.  While it is known by Brodmann \cite{B} that $f_I(k)$ is constant for all $k\gg 0$,  the initial behaviour of the depth function is not so easy to understand. In \cite{HH}  it was conjectured that any bounded convergent function $\ZZ_{\geq 0}\to \ZZ_{\geq 0}$ could be the depth function of a suitable ideal.  This conjecture has been   proved several years later  by H.T.~H\`{a}, H.~Nguyen, N.~Trung and T.~Trung in \cite[Theorem 4.1]{HNTT}.  

For a longer time it was expected that the depth function of a squarefree  monomial ideal is nonincreasing. Francisco, H\`{a} and Van Tuyl \cite{FHT}  showed that this expected behaviour for the powers of unmixed height~2 squarefree monomial ideals  would be a consequence of a combinatorial statement which says that for every positive integer $k$ and every $k$-critical (i.e., critically
$k$-chromatic) graph, there is a set of vertices whose replication
produces a $(k + 1)$-critical graph. However in 2014,  Kaiser, Stehl\'{i}k and \v{S}rekovski \cite{KSS} gave a counterexample to this and constructed an example  of a squarefree  monomial ideal  $I\subset S$  with $\depth (S/I^3) =0$ but $\depth(S/I^4)=4$.  It is still  open whether the depth function of the edge ideal of a graph is nonincreasing.

In the present paper, we study squarefree monomial ideals and their squarefree powers. Several algebraic properties of such powers have been studied in \cite{BHZ}, \cite{EHHS} and \cite{EH}.
Let  $I\subset S$ be a squarefree monomial ideal. The uniquely determined  minimal set of generators of $I$ is denoted by $G(I)$.  We denote by $I^{[k]}$ the $k$th \emph{squarefree power} of $I$. The generators of $I^{[k]}$ are the  products $u_1\cdots u_k$ with $u_i \in G(I)$, which form a squarefree monomial. Thus $u_1\cdots u_k\in G(I^{[k]})$ if and only if $u_1,\ldots, u_k$ is a regular sequence. 

A case of special interest is the squarefree powers of the edge ideal of a graph. Let $G$ be a finite simple graph on $[n] = \{1,\ldots,n\}$,  and let as before $S=K[x_1, \ldots, x_n]$ be the polynomial ring in $n$ variables over a field $K$.  The \emph{edge ideal} of $G$ is the squarefree monomial ideal $I(G)$ of $S$,  which is generated by those $x_ix_j$ for which $\{i, j\}$ is an edge of $G$.  Now, given an integer $k > 0$, the $k$th squarefree power of $I(G)$ is the squarefree monomial ideal $I(G)^{[k]}$ of $S$ which is generated by the  squarefree monomials
	\[
	x_{i_1}x_{j_1} x_{i_2}x_{j_2} \cdots x_{i_k}x_{j_k},
	\]	
where each $\{i_q, j_q\}$ is an edge of $G$ and where $\{i_q, j_q\} \cap \{i_r, j_r\} = \emptyset$ for $q \neq r$.  It follows that the minimal set of  generators of $I(G)^{[k]}$
are in bijection to the vertex sets of $k$-matchings of $G$. Recall that a set $M$ of edges of $G$ is called a \emph{matching}, if no distinct two edges of $G$ have a common vertex. The matching $M$ is called a \emph{$k$-matching}, if $|M|=k$. The set of all matchings form a simplicial complex, the so-called matching complex of $G$. The \emph{matching number} $\nu(G)$ of $G$ is the maximum cardinality of a matching of $G$. We have $I(G)^{[k]}\neq 0$ if and only if  $k>\nu(G)$.

Let again $I\subset S$ be an arbitrary squarefree monomial ideal.  We  denote by  $\nu(I)$ the maximum length of a monomial regular sequence in $I$. Thus if $G$ is a graph, then 
$\nu(I(G))=\nu(G)$.  We are interested in the depth of the squarefree powers of $I$. Let $d_k=\min\{\deg u\:  u \in G(I^{[k]})\}$. We also set $d=d_1$. Note that $d_{k+1}\geq d_k+d\geq (k+1)d$ for all $k<\nu(I)$.  Our first result is Proposition~\ref{prop:lower bound},  where it is shown that   $\depth (S/I^{[k]})\geq d_k-1$ for $k=1,\ldots,\nu(I)$.  In particular, in the important special case that $I$ is generated in  the single degree $d$,  we have  $\depth (S/I^{[k]})\geq dk-1$ for $k=1,\ldots,\nu(I)$. Thus, in contrast to ordinary powers, the depth function of squarefree powers  tends to be nondecreasing. The picture changes,  if we consider the function \[g_I(k)=\depth (S/I^{[k]})-(d_k-1)\]  for $k=1,\ldots,\nu(I)$. We call $g_I(k)$ the   {\em normalized depth function} of $I$. In all our results and the examples we considered,  $g_I(k)$ is a nonincreasing function. This fact and also Corollary~\ref{nu} leads us to the following 
\begin{conjecture}
\label{conjecture}
Let $I$ be a squarefree monomial ideal. Then  $g_I(k)$ is a nonincreasing function. 
\end{conjecture}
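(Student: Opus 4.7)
The conjecture amounts to the one-step bound
\[
\depth(S/I^{[k+1]}) - \depth(S/I^{[k]}) \le d_{k+1} - d_k, \qquad k = 1,\ldots,\nu(I)-1,
\]
so my plan is to attempt it by an induction on $k$ with an inner induction on the number of variables. As a first reduction I would focus on the equigenerated case: if all generators of $I$ have the same degree $d$, then $d_k = kd$ throughout, and the desired inequality simplifies to $\depth(S/I^{[k+1]}) \le \depth(S/I^{[k]}) + d$. The edge ideal case ($d=2$) is the most important instance and would be the target of the first serious attempt, since it supplies a rich combinatorial toolkit (matchings, deletion, and link operations on the underlying graph).

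For the inductive engine I would exploit a colon-ideal short exact sequence. Direct computation with the generators of $I^{[k+1]}$ yields the clean monomial identity
\[
(I^{[k+1]} : u) = J^{[k]}, \qquad u \in G(I),
\]
where $J$ is the squarefree monomial ideal generated by those $v \in G(I)$ with $\gcd(u,v)=1$. This produces the short exact sequence
\[
0 \longrightarrow \bigl(S/J^{[k]}\bigr)(-\deg u) \xrightarrow{\;\cdot u\;} S/I^{[k+1]} \longrightarrow S/(I^{[k+1]},u) \longrightarrow 0.
\]
In the edge ideal setting, with $u = x_ix_j$ corresponding to an edge $e$, $J = I(G[V\setminus\{i,j\}])$ lives on strictly fewer variables, so $\depth(S/J^{[k]})$ is accessible by induction on the number of vertices. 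In parallel, $(I^{[k+1]},u)$ is governed by $I(G \setminus e)^{[k+1]}$, giving a deletion-based recursion. A careful choice of $u$ (for instance a generator of minimum degree, or an edge incident to a low-degree vertex) should allow the two recursions to be combined.

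The principal obstacle is that the depth lemma extracts \emph{lower} bounds on $\depth(S/I^{[k+1]})$ from such a short exact sequence, whereas the conjecture requires an \emph{upper} bound. Producing upper bounds ultimately forces one to exhibit nonvanishing local cohomology of $S/I^{[k+1]}$ in sufficiently high homological degree, equivalently nontrivial reduced homology of the appropriate links in the Stanley--Reisner complex of $I^{[k+1]}$ via Hochster's formula. For edge ideals this complex is essentially the matching complex of $G$, whose homotopy type notoriously fluctuates with the size of the matchings, and this fluctuation is precisely the source of the delicacy of the conjecture. A promising auxiliary tactic is to pass through Alexander duality, translating the inequality into monotonicity of a regularity-like invariant of the dual, and to verify the conjecture in successive strata (ideals whose squarefree powers admit linear resolutions, ideals with well-behaved cellular resolutions, chordal or forest-type graphs) before attacking the general case.
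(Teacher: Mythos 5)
The statement you are addressing is presented in the paper as a \emph{conjecture}, and the paper offers no proof of it; it remains open, supported only by computations and by the partial results developed in the body of the paper (matroidal ideals, graphs with disconnected complement, the endpoint case $k=\nu(G)$, multiple whiskered complete graphs). Your text is a research plan rather than a proof, and you candidly identify the central obstruction yourself: a short exact sequence
$0 \to \bigl(S/(I^{[k+1]}:u)\bigr)(-\deg u) \to S/I^{[k+1]} \to S/(I^{[k+1]},u)\to 0$
feeds into the depth lemma in the wrong direction, yielding \emph{lower} bounds for $\depth(S/I^{[k+1]})$ where the conjecture demands an \emph{upper} bound (equivalently, the nonvanishing of a Betti number or local cohomology module in high homological degree). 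Nothing in the proposal closes that gap, so no part of the conjecture is actually established by it.

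Beyond the unfinished architecture, the one concrete identity you assert is false. Take $I=I(P_4)=(x_1x_2,\,x_2x_3,\,x_3x_4)$ and $u=x_2x_3$. Then $I^{[2]}=(x_1x_2x_3x_4)$ and $(I^{[2]}:u)=(x_1x_4)$, whereas the ideal $J$ generated by the members of $G(I)$ coprime to $u$ is zero, so $J^{[1]}=0\neq (I^{[2]}:u)$. Only the inclusion $J^{[k]}\subseteq (I^{[k+1]}:u)$ holds in general: the colon also picks up monomials $w$ for which $uw$ is divisible by a squarefree product of $k+1$ pairwise coprime generators, none of which need be $u$ itself. Any deletion/link recursion built on the asserted equality would therefore miscompute the left-hand term of your sequence. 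If you want to make progress on special cases, the techniques the paper actually uses to obtain the required \emph{upper} bounds on depth are well-ordered facet covers of the simplicial complex whose facets are the vertex sets of $k$-matchings (forcing $\beta_{n-2k+1,n}(S/I(G)^{[k]})\neq 0$) and the linear-quotients depth formula; those, rather than the colon exact sequence, are where I would direct your effort.
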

We say that the squarefree powers of $I$ have \emph{minimum depth} if  $g_I(k)=0$ for all $k=1,\dots ,\nu(I)$. In Section 1, we give  examples of ideals whose squarefree powers have minimum depth. Among them  are the edge ideals of complete graphs  and complete bipartite graphs. More generally,  any squarefree Veronese ideal as well as any matroidal ideal has minimum depth, see Examples~\ref{ramsau} and Theorem~\ref{matroid}. 

In the following two sections, we focus on edge ideals and  give criteria for minimum depth. One of the main results of this paper is 

\begin{corollary}
	Let $G$ be a graph with no isolated vertices and matching number $\nu(G)$. Then the following statements are equivalent:
	\begin{enumerate}
		\item[(i)] $G^c$ is disconnected.
		\item[(ii)] $g_{I(G)}(1)=0$.
		\item[(iii)] $g_{I(G)}(k)=0$ for all $1\leq k\leq \nu(G)$.
	\end{enumerate}
\end{corollary}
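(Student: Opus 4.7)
The implication (iii) $\Rightarrow$ (ii) is immediate by setting $k=1$.

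For (ii) $\Leftrightarrow$ (i), we have $d_1=2$ and $\depth(S/I(G))\ge 1$ always (as $I(G)\ne \mm$), so $g_{I(G)}(1)=\depth(S/I(G))-1$ vanishes iff $\depth(S/I(G))=1$. By the classical Reisner-type depth criterion, $\depth(S/I(G))\ge 2$ is equivalent to $\tilde H_0(\Delta(G);K)=0$, where $\Delta(G)$ denotes the independence complex of $G$. Because $G$ has no isolated vertex, $V(G)$ is the vertex set of $\Delta(G)$, and two vertices of $\Delta(G)$ span an edge iff they form a non-edge of $G$. Thus the $1$-skeleton of $\Delta(G)$ is $G^c$, and $\Delta(G)$ is connected iff $G^c$ is. Hence $g_{I(G)}(1)=0$ iff $G^c$ is disconnected.

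For (i) $\Rightarrow$ (iii), the disconnectedness of $G^c$ is equivalent to writing $G$ as a complete join $G=G_1 * G_2$ with nonempty vertex parts $V_1$ and $V_2$, i.e.\ every pair $\{v_1,v_2\}$ with $v_i\in V_i$ is an edge of $G$. For each $1\le k\le\nu(G)$, the inequality $\depth(S/I(G)^{[k]})\ge 2k-1$ comes from Proposition~\ref{prop:lower bound} (taking $d_k=2k$). So the task is to prove the matching upper bound $\depth(S/I(G)^{[k]})\le 2k-1$; this should be established as the main theorem of Section~3 and quoted here.

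The upper bound is the main obstacle. The a priori route via associated primes --- producing a facet of the Stanley-Reisner complex $\Delta^{(k)}$ of $I(G)^{[k]}$ of cardinality $2k-1$ --- is not tight enough: already for the complete bipartite subgraph $K_{V_1,V_2}\subseteq G$ the minimum facet size is $\min(|V_1|,|V_2|)+k-1$, which strictly exceeds $2k-1$ once $k<\min(|V_1|,|V_2|)$. Hence the upper bound must be obtained by sharper homological methods. A natural plan is to (a) take the case of complete bipartite graphs from Example~\ref{ramsau} as the base, and (b) induct on $|V(G)|$ by deleting a vertex $v\in V_1$ and applying the Depth Lemma to the short exact sequence
\[
0 \to S/(I(G)^{[k]}\!: x_v) \xrightarrow{\cdot x_v} S/I(G)^{[k]} \to S/(I(G)^{[k]}+(x_v)) \to 0,
\]
arguing that the two outer terms are controlled by squarefree powers of edge ideals of complete joins on fewer vertices, so the induction closes. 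The decisive difficulty is verifying that this setup yields the upper bound (and not merely a matching lower bound) and that the depth does not jump at the top of the range: as $k\to\nu(G)$ the induced matching numbers can drop abruptly under vertex removal, so this boundary regime may require a separate direct computation on the residual join structure $G_1[V_1\setminus\{v\}]*G_2$.
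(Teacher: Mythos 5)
Your handling of (iii) $\Rightarrow$ (ii) and of the equivalence (i) $\Leftrightarrow$ (ii) is correct and is essentially the paper's argument (Corollary~\ref{prop:depth 1}): since $\Gamma_1(G)$ is the clique complex of $G^c$, Hochster's formula reduces $\depth(S/I(G))=1$ to $\tilde H_0(\Gamma_1(G);K)\neq 0$, i.e.\ to disconnectedness of $G^c$.

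However, for (i) $\Rightarrow$ (iii) you have not given a proof: you correctly identify that the whole content is the upper bound $\depth(S/I(G)^{[k]})\leq 2k-1$, you correctly observe that the naive route via large faces of the Stanley--Reisner complex cannot work, and then you propose to ``quote'' a theorem that you do not prove, sketching instead an induction on $|V(G)|$ via the exact sequence $0\to S/(I(G)^{[k]}:x_v)\to S/I(G)^{[k]}\to S/(I(G)^{[k]}+(x_v))\to 0$ while explicitly conceding that ``the decisive difficulty is verifying that this setup yields the upper bound.'' That concession is exactly where the mathematics lies, so the argument has a genuine gap. Note also that the Depth Lemma naturally produces \emph{lower} bounds on the depth of the middle term; extracting an upper bound requires controlling which of the outer depths is strictly smaller, and you give no mechanism for that, nor for the boundary behaviour near $k=\nu(G)$ that you yourself flag. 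The paper closes this gap by an entirely different device: after disposing of the complete bipartite case via Example~\ref{ramsau}(c), it uses Lemma~\ref{lem:choose edges} to select a $(k-1)$-matching $M$ with an edge inside one join factor and a vertex $v$ in the other, builds the explicit facets $F_j=V(M)\cup\{v,x_j\}$ of the facet complex of $I(G)^{[k]}$, and shows $F_1,\dots,F_{n-2k+1}$ is a well-ordered facet cover in the sense of Erey--Faridi; their nonvanishing theorem then gives $\beta_{n-2k+1,n}(S/I(G)^{[k]})\neq 0$, hence $\projdim\geq n-2k+1$ and $\depth\leq 2k-1$ by Auslander--Buchsbaum. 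Some such explicit combinatorial construction (or an equivalent nonvanishing argument for the top-degree Betti number) is what your proposal is missing.
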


For the  proof of this result, the concept of well-ordered facet covers,  due to  Erey and Faridi \cite{EF},  is used  to give a non-vanishingness condition for Betti numbers of squarefree powers.

A subset $D$ of vertices of a graph $G$ is called a \emph{dominating set} if every vertex of $G$ which is not in $D$ is adjacent to some vertex in $D$. A complete subgraph $K_m$ of $G$ is called a \emph{dominating clique} if $V(K_m)$ is a dominating set. This  notion of dominating cliques  provides a sufficient condition for having minimum depth for a given squarefree power~$k$.  Indeed, we have 

\begin{theorem}\label{thm: dominating clique}
	Let $G$ be a graph and let $2\leq k\leq \nu(G)$. If $G$ has a dominating clique $K_{2k-1}$, then $g_{I(G)}(k)=0$.
\end{theorem}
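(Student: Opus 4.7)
The plan is to sandwich $\depth(S/I(G)^{[k]})$ between two equal values. Since $I(G)$ is generated in degree $2$ and $k\leq \nu(G)$, every generator of $I(G)^{[k]}$ has degree $2k$, so $d_k=2k$, and Proposition~\ref{prop:lower bound} already furnishes
\[
\depth(S/I(G)^{[k]})\geq d_k-1=2k-1.
\]
Everything therefore reduces to proving the reverse inequality $\depth(S/I(G)^{[k]})\leq 2k-1$.

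For the upper bound, I would exhibit an associated prime $P$ of $S/I(G)^{[k]}$ with $\dim(S/P)=2k-1$ and invoke the standard inequality $\depth(M)\leq \dim(S/P)$ valid for every $P\in\Ass(M)$. Since $I(G)^{[k]}$ is squarefree, hence radical, its associated primes coincide with its minimal primes, and these are monomial primes of the form $P_C=(x_i:i\in C)$ where $C\subseteq V(G)$ runs over the minimal vertex covers of the hypergraph whose hyperedges are the vertex sets of the $k$-matchings of $G$.

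Set $W=V(K_{2k-1})$ and take the candidate $C=V(G)\setminus W$, which gives $\dim(S/P_C)=|W|=2k-1$. Two verifications remain. First, $P_C\supseteq I(G)^{[k]}$: each generator of $I(G)^{[k]}$ corresponds to a $k$-matching with $2k$ vertices, and since $|W|=2k-1$ no such matching fits inside $W$, so every generator involves at least one variable indexed by $C$. Second, minimality: for every $v\in C$ I need to produce a $k$-matching supported on $W\cup\{v\}$, so that $C\setminus\{v\}$ fails to be a vertex cover. Here is precisely where the dominating hypothesis enters: $v$ has a neighbour $u\in W$, and $G[W\setminus\{u\}]=K_{2k-2}$ carries a perfect matching of size $k-1$ which, together with the edge $\{u,v\}$, yields the required $k$-matching inside $W\cup\{v\}$. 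The only minor formality is that minimality must hold among all primes, not merely among monomial ones, but this is automatic since the minimal primes of a monomial ideal are themselves monomial.

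I do not anticipate any serious obstacle; the argument simply translates the hypothesis into combinatorial language. The guiding insight is that $V(K_{2k-1})$ is a \emph{maximal} set of vertices containing no $k$-matching: its size is one short of a $k$-matching, yet the dominating property forces every single-vertex enlargement to contain one. This is exactly what makes $V(G)\setminus V(K_{2k-1})$ a minimal vertex cover of the $k$-matching hypergraph, and therefore an associated prime pinning the depth down to the predicted value $2k-1$.
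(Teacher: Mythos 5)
Your proof is correct, and it reaches the conclusion by a genuinely different algebraic route than the paper, although the underlying combinatorics is the same. The paper also works with the sets $F_j=V(K_{2k-1})\cup\{x_j\}$ for $x_j\notin V(K_{2k-1})$, but it views them as facets of the complex $\Delta$ with $\mathcal F(\Delta)=I(G)^{[k]}$, checks that they form a \emph{well-ordered} facet cover of cardinality $n-2k+1$, and then invokes the Erey--Faridi nonvanishing theorem (Theorem~\ref{thm:erey faridi}) to get $\beta_{n-2k+1,n}(S/I(G)^{[k]})\neq 0$, hence $\projdim\geq n-2k+1$ and $\depth\leq 2k-1$ by Auslander--Buchsbaum. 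You instead observe that $C=V(G)\setminus V(K_{2k-1})$ is a minimal vertex cover of the hypergraph of $k$-matching supports --- it is a cover because $|V(K_{2k-1})|=2k-1<2k$, and it is minimal because for each $v\in C$ the dominating hypothesis and a perfect matching of $K_{2k-2}$ produce a $k$-matching on $V(K_{2k-1})\cup\{v\}$ --- so $P_C$ is a minimal, hence associated, prime with $\dim(S/P_C)=2k-1$, and $\depth(S/I(G)^{[k]})\leq\dim(S/P_C)$. Your argument is more elementary: it needs only the standard inequality $\depth M\leq\dim(S/P)$ for $P\in\Ass(M)$ (equivalently, $\depth\leq n-\bigheight$), and avoids the well-ordered-cover machinery and the verification of the ``well-ordered'' condition entirely. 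What the paper's method buys in exchange is the stronger conclusion that the specific top-multidegree Betti number $\beta_{n-2k+1,n}$ is nonzero, which ties in with the homological criterion of Proposition~\ref{reduced homology} and with the uniform treatment of Theorem~\ref{thm: disconnected complement facet cover}. All the small points you flag (nonemptiness of $C$ is forced by $k\leq\nu(G)$; minimal primes of monomial ideals are monomial) are handled correctly.
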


We call a $k$-matching $M$ a \emph{dominating $k$-matching} if $V(M)$ is a dominating set.  For edge ideals with the property that $I^{[k]}$ has linear quotients, we have a criterion for minimum depth,  see Proposition~\ref{cochordal mindepth}.  As corollaries we obtain 

    \begin{corollary}\label{necessary condition}
	Let $G$ be a graph with no isolated vertices and $k$ be an integer with $1\leq k\leq \nu(G)$ where $I(G)^{[k]}$ has linear quotients. If $g_{I(G)}(k)=0$, then $G$ has a dominating $k$-matching. In particular, the statement holds for any cochordal graph $G$ with no isolated vertices.  	
\end{corollary}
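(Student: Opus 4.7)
The plan is to read the corollary as a direct consequence of Proposition~\ref{cochordal mindepth}, together with a known linear-quotients result for cochordal graphs. Proposition~\ref{cochordal mindepth} provides a criterion for minimum depth of $S/I(G)^{[k]}$ whenever $I(G)^{[k]}$ has linear quotients, and (as its name suggests) it is formulated so that the vanishing $g_{I(G)}(k)=0$ forces the existence of a dominating $k$-matching of $G$. Thus for the first assertion there is nothing more to prove than to invoke the proposition under the hypotheses stated: no isolated vertices, $1\le k\le \nu(G)$, $I(G)^{[k]}$ with linear quotients, and $g_{I(G)}(k)=0$.

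For the \emph{in particular} clause, the only additional input needed is that, for every cochordal graph $G$ and every $1\le k\le \nu(G)$, the squarefree power $I(G)^{[k]}$ has linear quotients. I would cite this from the prior work on squarefree powers of edge ideals (notably \cite{EH}), where a linear quotients ordering of the generators of $I(G)^{[k]}$ is obtained from a perfect elimination ordering of $G^c$. With this property in hand, the cochordal assumption places us in the setting of Proposition~\ref{cochordal mindepth}, and the first part of the corollary then supplies a dominating $k$-matching.

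If any point requires care it is conceptual rather than computational: one must verify that Proposition~\ref{cochordal mindepth}, which (based on its statement in the preceding section) characterizes minimum depth in terms of a dominating matching structure, really delivers a \emph{dominating} $k$-matching from $g_{I(G)}(k)=0$, and that the linear quotients ordering produced from a perfect elimination ordering of $G^c$ meets the hypothesis of that proposition. Once these two points are confirmed, the corollary is a purely formal consequence, with no further combinatorial argument required.
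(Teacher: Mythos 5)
Your proposal is correct and matches the paper's argument: the corollary is stated there as an immediate consequence of Proposition~\ref{cochordal mindepth}, whose condition (ii) explicitly produces a dominating $k$-matching, and the cochordal case follows because $I(G)^{[k]}$ has linear quotients for such graphs. The only cosmetic difference is the source for that last fact: the paper uses its own Corollary~\ref{cochordal linear quotients} (Lemma~\ref{lem:linear quotients squarefree} applied to $I(G)^k$, which has linear quotients by Fr\"oberg's theorem and the Herzog--Hibi--Zheng result on powers), rather than citing a perfect elimination ordering argument from earlier work.
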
 

\begin{corollary}\label{nu}
	Let $G$ be a graph with no isolated vertices. Then $g_{I(G)}(\nu(G))=0$.
\end{corollary}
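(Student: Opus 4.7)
The plan is to recognize $I(G)^{[\nu(G)]}$ as a matroidal ideal and then invoke Theorem~\ref{matroid} directly. Because every maximum matching of $G$ uses exactly $2\nu(G)$ vertices, we have $d_{\nu(G)} = 2\nu(G)$, and Proposition~\ref{prop:lower bound} already supplies the lower bound $\depth(S/I(G)^{[\nu(G)]}) \geq 2\nu(G) - 1$; only the matching upper bound is required.

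The key step is combinatorial. The generators of $I(G)^{[\nu(G)]}$ are precisely the monomials $\prod_{v \in V(M)} x_v$ as $M$ runs over the maximum matchings of $G$. By a classical theorem of Edmonds, the subsets $X \subseteq V(G)$ that can be covered by some matching of $G$ (i.e., $X \subseteq V(M)$ for some matching $M$) form the independent sets of a matroid on $V(G)$, the \emph{matching matroid} of $G$, whose bases are exactly the vertex sets of maximum matchings. A short exchange argument shows that when $G$ has no isolated vertex every vertex belongs to some maximum matching: if $v$ were in no maximum matching, picking any maximum matching $M$ and a neighbour $u$ of $v$ would allow either the extension $M \cup \{\{u,v\}\}$ (if $u \notin V(M)$) or the swap $(M \setminus \{\{u,w\}\}) \cup \{\{u,v\}\}$ (if $u$ is matched to $w$ in $M$), each giving a maximum matching containing $v$. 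Consequently every variable of $S$ corresponds to a non-loop element of the matching matroid, so $S$ is the natural ambient polynomial ring and $I(G)^{[\nu(G)]}$ is a matroidal ideal in the sense of Theorem~\ref{matroid}.

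Setting $J := I(G)^{[\nu(G)]}$, we have $J^{[1]} = J$ and the minimum degree of a generator of $J$ is $2\nu(G)$, so
\[
g_J(1) \;=\; \depth(S/J) - (2\nu(G)-1) \;=\; g_{I(G)}(\nu(G)).
\]
Applying Theorem~\ref{matroid} to the matroidal ideal $J$ gives $g_J(k) = 0$ for every $k \in [1, \nu(J)]$; taking $k = 1$ yields $g_{I(G)}(\nu(G)) = 0$, as desired. The main conceptual step is the identification with the matching matroid, and the only point that needs care is confirming that the ``no isolated vertex'' hypothesis keeps the matching matroid loopless so that Theorem~\ref{matroid} applies in the ambient $S$; both Edmonds' matroid property and this loopless check are classical.
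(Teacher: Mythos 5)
Your argument is correct, and it takes a genuinely different route from the paper. The paper deduces the corollary from Proposition~\ref{cochordal mindepth}: it invokes \cite[Theorem~5.1]{BHZ} to get that $I(G)^{[\nu(G)]}$ has linear quotients in the lexicographic order, takes the matching $M$ corresponding to the \emph{last} generator in that order, and for each uncovered vertex $v$ performs exactly the swap $(M\setminus\{\{u,w\}\})\cup\{\{u,v\}\}$ that you use to show every vertex lies in a maximum matching. You instead identify $I(G)^{[\nu(G)]}$ as the basis ideal of the Edmonds--Fulkerson matching matroid of $G$ --- the bases are precisely the vertex sets of maximum matchings, since a maximal matching-coverable set has cardinality $2\nu(G)$ and any matching covering it restricts to a maximum matching with that vertex set --- and then apply Theorem~\ref{matroid} (really Proposition~\ref{depthmatroid} for the ideal itself), after checking that the ``no isolated vertices'' hypothesis makes the matroid loopless so that $\supp(I(G)^{[\nu(G)]})=V(G)$ and $\depth(S/I(G)^{[\nu(G)]})=2\nu(G)-1$. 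Both proofs ultimately rest on a linear-quotients computation (yours through \cite[Lemma~1.3]{HT} inside Proposition~\ref{depthmatroid}, the paper's through \cite{BHZ}) plus the same edge-swap combinatorics, but your identification is more structural: it bypasses the criterion of Proposition~\ref{cochordal mindepth} entirely and yields additional information about the top squarefree power for free (e.g.\ that it is polymatroidal, hence has a linear resolution). The paper's route has the advantage of staying within the machinery already developed for arbitrary squarefree powers of edge ideals, where no matroid structure is available for $k<\nu(G)$.
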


As a final application for our minimum depth criterion, we discuss the depth of squarefree powers of edge ideals of \emph{multiple whiskered complete graphs} which are obtained by attaching at least one whisker to each vertex of a complete graph $K_s$ with $s\geq 2$. We denote such a graph by $G=H(a_1,\ldots,a_s)$ where $a_i\geq 0$ is the number of whiskers attached to the vertex $i$ of $K_s$. For this family of graphs minimum depth is achieved in the second half of the interval $[1,\nu(G)]$. The precise result is the following.

\begin{theorem}\label{multiwhiskered depth}
	Let $G=H(a_1,\ldots,a_s)$ with $a_i\geq 1$ for all $i=1,\ldots,s$ and let  $k=1,\ldots,\nu(G)$. Then we have:
	\begin{enumerate}
		\item[(a)] $\nu(G)=s$;
		\item[(b)] $G$ is cochordal;
		\item[(c)] The following statements are equivalent: 
		\begin{enumerate}
			\item[(i)] $g_{I(G)}(k)=0$.
			\item[(ii)] $\lfloor s/2 \rfloor+1 \leq k \leq s$. 
		\end{enumerate}
	\end{enumerate}
\end{theorem}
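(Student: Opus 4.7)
For (a), every edge of $G$ contains at least one vertex of the clique $V(K_s)=\{v_1,\ldots,v_s\}$, so any matching has at most $s$ edges and $\nu(G)\le s$; conversely, $\{\{v_i,w_{i,1}\}:1\le i\le s\}$ is a matching of size $s$, giving $\nu(G)=s$. For (b), the complement $G^c$ is a split graph with independent side $V(K_s)$ (a clique in $G$) and clique side the set of all whisker leaves (pairwise non-adjacent in $G$), so it is chordal: an induced cycle of length $\ge 4$ would either contain two consecutive independent-side vertices, or pass through some $v_i$ whose two neighbours then lie on the clique side and are connected by an edge, giving a chord.

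For the sufficiency in (c), since $G$ is cochordal, $I(G)^{[k]}$ has linear quotients and Proposition~\ref{cochordal mindepth} applies. For $k=s$ invoke Corollary~\ref{nu}. For $\lfloor s/2\rfloor+1\le k\le s-1$, set $m=s-k$ and $t=2k-s$; then $t\ge 1$, and we build a $k$-matching $M$ by taking a perfect matching on some $2m$ vertices of $V(K_s)$ together with the $t$ whisker edges $\{v_i,w_{i,1}\}$ at the remaining vertices of $V(K_s)$. This $M$ satisfies $V(M)\supseteq V(K_s)$, is dominating, and contains at least one whisker edge; a direct verification against the sufficient half of the criterion in Proposition~\ref{cochordal mindepth} then yields $g_{I(G)}(k)=0$.

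For the necessity in (c), assume $g_{I(G)}(k)=0$; by Corollary~\ref{necessary condition}, $G$ has a dominating $k$-matching $M$. For each $i$, the unique neighbour of every whisker $w_{i,j}$ is $v_i$, so domination forces $v_i\in V(M)$: if $v_i\notin V(M)$, then no $w_{i,j}$ can lie in $V(M)$ either (its only edge is $\{v_i,w_{i,j}\}$), so none of the whiskers of $v_i$ is dominated. Therefore $V(K_s)\subseteq V(M)$ and $2k=|V(M)|\ge s$, giving $k\ge\lceil s/2\rceil=\lfloor s/2\rfloor+1$ when $s$ is odd. The remaining case is even $s$ with $k=s/2$, where $t=2k-s=0$ forces $M$ to be a perfect matching of $V(K_s)$ using only $K_s$-edges and no whisker edges; invoking the iff form of the criterion of Proposition~\ref{cochordal mindepth}, one shows that such an $M$ fails the stronger hypothesis, so $g_{I(G)}(s/2)>0$.

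The main obstacle is this last step: Corollary~\ref{necessary condition} is sharp for odd $s$ but not for even $s$ at $k=s/2$, so one must extract from Proposition~\ref{cochordal mindepth} the extra structural requirement on the dominating matching — in our setting, the presence of at least one whisker edge — and verify that a perfect matching on the clique $V(K_s)$ violates it, producing the sharp bound $k\geq\lfloor s/2\rfloor+1$.
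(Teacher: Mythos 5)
Your parts (a) and (b) are fine, and your treatment of the easy half of the necessity direction (domination forces $V(K_s)\subseteq V(M)$, hence $2k\geq s$, which settles all $k\leq\lfloor s/2\rfloor$ except $k=s/2$ for even $s$) matches the paper. But there are two genuine gaps, one of which is the heart of the theorem.

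First, in the sufficiency direction your ``direct verification against the sufficient half of the criterion'' is not actually available for the specific matching $M$ you construct. Condition (ii)(b) of Proposition~\ref{cochordal mindepth} demands that for each outside vertex $t$ the witnessing matching $M'$ correspond to a generator that comes \emph{earlier} in the linear quotients ordering than the generator of $M$; nothing in your construction controls where your hand-picked $M$ sits in that ordering. The paper resolves this by taking $M$ to be the \emph{last} matching, in the linear quotients order, among all $k$-matchings with $[s]\subsetneq V(M)$, and then checking that each modified matching $M'$ or $M''$ still properly covers $[s]$ and therefore necessarily precedes $M$. Your construction does prove that at least one such matching exists (which is the input the paper needs), but the maximality choice and the verification that the swaps stay inside the family are the actual content of this step.

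Second, and more seriously, the case $s=2k$ is not proved. You correctly identify it as ``the main obstacle,'' but your proposed resolution --- that the required extra condition is ``the presence of at least one whisker edge'' and that a perfect matching of $K_s$ visibly violates it --- misdescribes what must be shown. When $s=2k$, every dominating $k$-matching has $V(M)=[s]$ exactly, so the generator in question is forced to be $x_1\cdots x_s$; the witnessing matchings $M'$ required by condition (ii)(b) (one whisker edge plus a $(k-1)$-matching on $s-2$ clique vertices) do exist combinatorially for every leaf. The obstruction is purely about the linear quotients \emph{ordering}: one must show that in no linear quotients order can all of these $M'$ precede $x_1\cdots x_s$. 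The paper does this with a delicate argument, defining $\bar v$ as the first position whose support meets the leaf set exactly in $\{v\}$, taking $\alpha=\max_v \bar v$, and deriving a contradiction from the fact that the colon ideal $(u_1,\ldots,u_{\alpha-1}):u_\alpha$ must be generated by variables. None of this is present or replaceable by the structural observation you suggest, so the equivalence in (c) is not established for even $s$ at $k=s/2$.
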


Without any doubt,
one of the most challenging open questions is  to find  all  possible  normalized  depth functions. Our conjecture implies that if $g_I(k)=0$, then $g_I(k+1)=0$
for all $k<\nu(I)$. Therefore, as a partial answer to the above question, it would be nice to solve the following

\begin{problem}
For given integers $1 \leq s < m$, find a finite simple graph $G$ with $\nu(G) = m$ and 
		\begin{enumerate}
			\item[(i)] $g_{I(G)}(k)>0$ for $k = 1, \ldots, s$;
			\item[(ii)] $g_{I(G)}(k)=0$ for $k = s + 1, \ldots, m$.
		\end{enumerate}
\end{problem}

Throughout the paper, unless otherwise stated, $S=K[x_1,\ldots,x_n]$ is the polynomial ring in $n$ variables over a field $K$, all graphs and simplicial complexes have $n$ vertices corresponding to the $n$ variables of $S$.

\section{A lower bound for the depth of squarefree powers}\label{Sec:general squarefree powres}


In this section, we consider squarefree powers of any squarefree monomial ideal and provide a lower bound for their depth. Besides discussing several examples, we also show that the normalized depth function of a matroidal ideal is zero. 
 
For the lower bound of the depth of squarefree powers we have the following result.

\begin{Proposition}\label{prop:lower bound}
Let $I\subset S$ be a squarefree monomial ideal. Then 
\begin{enumerate}
\item[(a)] $I^{[k]}=0$ if and only if $k> \nu(I)$;
\item[(b)] $g_I(k)\geq 0$ 
for all $k=1,\ldots, \nu(I)$.    
\end{enumerate}
\end{Proposition}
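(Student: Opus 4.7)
The plan is to prove (a) and (b) separately, with (a) being a direct combinatorial observation and (b) resting on a Stanley--Reisner-theoretic argument.

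For (a), I first observe that a product $u_1\cdots u_k$ with $u_i\in G(I)$ is squarefree precisely when the supports of $u_1,\ldots,u_k$ are pairwise disjoint, which in turn is equivalent to $u_1,\ldots,u_k$ being a monomial regular sequence in $S$. Hence $I^{[k]}\neq 0$ if and only if $G(I)$ contains a $k$-tuple of pairwise disjoint squarefree monomials. Given any monomial regular sequence of length $k$ in $I$, replacing each member by one of its divisors in $G(I)$ preserves disjointness of supports; hence such a $k$-tuple in $G(I)$ exists if and only if $\nu(I)\geq k$, proving (a).

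For (b), the key observation is that $I^{[k]}$ is a squarefree monomial ideal all of whose minimal generators have degree $\geq d_k$. Writing $I^{[k]}=I_\Delta$ for its Stanley--Reisner complex $\Delta$, this says that every subset of $[n]$ of size at most $d_k-1$ is a face of $\Delta$; equivalently, $\Delta$ contains the full $(d_k-2)$-skeleton of the simplex $2^{[n]}$. I plan to combine this containment with Reisner's criterion (equivalently, Hochster's formula for local cohomology): $\depth(S/I_\Delta)\geq t$ if and only if $\tilde H_{j}(\lk_\Delta \sigma;K)=0$ for every $\sigma\in\Delta$ and every $j\leq t-|\sigma|-2$, applied with $t=d_k-1$.

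To verify this vanishing, I note that for any $\sigma\in\Delta$ the link $\lk_\Delta\sigma$ contains the full $(d_k-|\sigma|-2)$-skeleton of the simplex on $[n]\setminus\sigma$; indeed, any $\tau\subseteq[n]\setminus\sigma$ with $|\tau|\leq d_k-1-|\sigma|$ satisfies $|\sigma\cup\tau|\leq d_k-1$ and hence lies in $\Delta$. The topological input I will use is the standard fact that any simplicial complex on a vertex set $V$ containing the $m$-skeleton of $2^V$ has vanishing reduced homology in degrees $<m$; this is because one can fill it in to the contractible simplex $2^V$ by adjoining only faces of dimension $>m$, which leaves the chain groups and boundary maps in degrees $\leq m$ undisturbed. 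Applied with $m=d_k-|\sigma|-2$, this yields $\tilde H_{j}(\lk_\Delta \sigma;K)=0$ for all $j\leq d_k-|\sigma|-3$, which is exactly what Reisner's criterion demands.

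I expect the essential content of the argument to be concentrated in the skeleton inclusion observation; the only real care required is to line up the index shifts in Reisner's/Hochster's formulation correctly, particularly the $-|\sigma|-2$ offset between local cohomology degree and the degree in the reduced (co)homology of the link.
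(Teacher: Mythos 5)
Your proof is correct. Part (a) is essentially the paper's argument: pass from a monomial regular sequence to minimal generators dividing its members, note that disjointness of supports is preserved, and observe that more than $\nu(I)$ pairwise coprime generators cannot exist. Part (b), however, takes a genuinely different route. The paper works on the resolution side: by Auslander--Buchsbaum it suffices to bound $\projdim(S/I^{[k]})$ above by $n-d_k+1$, which follows by combining the minimality bound $j\geq d_k+i-1$ for nonzero $\beta_{i,j}(S/I^{[k]})$ with the fact, read off from Hochster's formula, that Betti numbers of a squarefree ideal are concentrated in squarefree degrees, so $j\leq n$. You instead work on the local cohomology side via Reisner's criterion in its depth form, using that a complex whose minimal nonfaces all have at least $d_k$ vertices contains the full $(d_k-2)$-skeleton of the simplex, that this persists for links after the shift by $|\sigma|$, and that skeleta of simplices are acyclic in low degrees. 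Your index bookkeeping ($j\leq t-|\sigma|-2$ with $t=d_k-1$ versus the $(d_k-|\sigma|-2)$-skeleton) lines up correctly, and the degenerate cases $|\sigma|\geq d_k-1$ are vacuous since reduced homology in degrees $\leq -2$ is zero. Both proofs use only that $I^{[k]}$ is a squarefree ideal generated in degrees at least $d_k$; the paper's version is shorter because the degree count is done once on Betti numbers rather than face-by-face on links, while yours makes the underlying topological reason for the bound explicit.
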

\begin{proof}
(a) Let $m=\nu(I)$ and $v_1,\ldots, v_m$ be a maximal regular sequence of monomials in $I$. Then for each $i$, there exists $u_i\in G(I)$ which divides $v_i$, and hence $u_1,\ldots,u_m$ is  again a maximal regular sequence of monomials in $I$. In particular, $\gcd(u_i,u_j)=1$ for  all $i\neq j$. It follows that $u_1\cdots u_m$ is squarefree. This implies that $I^{[k]}\neq 0$ for any $k\leq \nu(I)$.  Any product of generators with more than $m$  many factors cannot be squarefree, since these factors cannot form a regular sequence. This shows that $I^{[k]}= 0$ for $k>m$.

(b) Let $k=1,\ldots, \nu(I)$. By using the Auslander-Buchsbaum formula, it suffices to show that  $\projdim (S/I^{[k]})\leq n-d_k+1$ where $d_k=\min\{\deg u\:  u \in G(I^{[k]})\}$. We observe that for any $i>0$ for which $\beta_{i,j}(S/I^{[k]})\neq 0$,  we have $j\geq d_k+i-1$. 
	
It follows from Hochster's formula that $n\geq j$ for any $j$ such that $\beta_{i,j}(S/I^{[k]})\neq 0$. Therefore, $n\geq  d_k+i-1$, as desired. 
\end{proof}

Let $I$ be a squarefree monomial ideal, and let $k$ be an integer with $1\leq k\leq \nu(I)$. We say that $S/I^{[k]}$ has  {\em minimum depth} (or simply say that $I^{[k]}$ has \emph{minimum depth} when the polynomial ring is clear from the context) if $g_I(k)=0$.     

\begin{Examples} \label{ramsau} {\em
(a) Let $\mm$ be the graded maximal ideal of $S$, and let $I=\mm^{[d]}$ for some $d\leq n$. By \cite[Corollary~3.4]{HH}, we have $\depth(S/I)= d-1$. Since $I^{[k]}=\mm^{[kd]}$, it follows that $\depth(S/I^{[k]})= dk-1$ for $dk\leq n$. 

(b) Consider the polynomial rings  $S_1=K[x_1,\ldots,x_n]$ and  $S_2=K[y_1,\ldots,y_m]$, and let $I\subset S_1$ and  $J\subset S_2$ be graded ideals. Moreover, let $S=K[x_1,\ldots,x_n, y_1,\ldots,y_m]$. Then \cite[Corollary 3.2]{HRR} implies that 
$\depth(S/IJ)= \depth(S_1/I)+\depth(S_2/J)+1$.

In the given  situation,  assume that $I$ is a monomial ideal generated in degree $d_1$ and  $J$ is a monomial ideal generated in degree $d_2$.  Then $IJ$ is generated in degree $d=d_1+d_2$. We have $(IJ)^{[k]}=I^{[k]}J^{[k]}$, since $I$ and $J$ are ideals in different sets of variables. Therefore, $\depth(S/(IJ)^{[k]})= \depth(S_1/I^{[k]})+\depth(S_2/J^{[k]})-1$. By Proposition~\ref{prop:lower bound} we have 
$\depth(S/(IJ)^{[k]})\geq dk-1$, $\depth(S_1/I^{[k]})\geq d_1k-1$ and  $\depth(S_2/J^{[k]})\geq d_2k-1$. Thus we see that $S/(IJ)^{[k]}$ has minimum depth if and only if both  $S_1/I^{[k]}$ and $S_2/J^{[k]}$  have minimum depth. 

(c) Let $I$ be the edge ideal of a complete bipartite graph with the vertex set partition $[m]\union[n]$. Then $I=(x_1,\ldots,x_m)(y_1,\ldots,y_n)$, and we may apply (a) and (b) to see that $I^{[k]}$  has minimum depth for all $1\leq k\leq \nu(I)$. 

(d)   Let $G$ be the graph which is a 3-cycle with two whiskers at each vertex, and let $I$ be its edge ideal. It can be checked that $\depth(S/I)=5$ and $\depth(S/I^{[2]}) =3$. This shows that if $I\subset S$ is a squarefree monomial ideal generated in a single degree, then   $\depth(S/I^{[k]})$ is not necessarily an increasing function of $k$. This example, and a related family of graphs will be studied in Section~\ref{Sec:Linear quotients and minimal depth} in more details. 
}
\end{Examples}

 By the \emph{squarefree part} of a monomial ideal $J$, we mean the ideal generated by squarefree generators of $J$. It is clear that for any $k$, the squarefree part of $J^k$ coincides with $J^{[k]}$. 
 
 Let $I\subset S$ be a squarefree monomial ideal generated in degree $d$,  and suppose that $I^{[k]}=\mm^{[dk]}$ for some $k$ with $dk\leq n$. Then not only $S/I^{[k]}$ has minimum depth, but we also have $I^{[\ell]}=\mm^{[d\ell]}$ for all $\ell\geq k$ (which then implies that  $S/I^{[\ell]}$  has minimum depth for $\ell\geq k$ with $\ell k\leq n$).  This follows from Example~\ref{ramsau}~(a) and the next slightly more general result.

\begin{Proposition}
\label{powers equal}
Let $I\subset J\subset S$ be squarefree monomial ideals, and suppose that $I^{[k]}=J^{[k]}$ for some $k$. Then $I^{[\ell]}=J^{[\ell]}$ for all $\ell\geq k$.
\end{Proposition}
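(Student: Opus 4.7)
The plan is to induct on $\ell \ge k$: the base case $\ell = k$ is the hypothesis, and only the inclusion $J^{[\ell+1]} \subseteq I^{[\ell+1]}$ requires argument, since $I \subseteq J$ yields the other inclusion for free. So, under the inductive hypothesis $I^{[\ell]} = J^{[\ell]}$, I would take an arbitrary generator $m = v_1 \cdots v_{\ell+1} \in G(J^{[\ell+1]})$, with $v_1,\ldots,v_{\ell+1} \in G(J)$ pairwise coprime, and show $m \in I^{[\ell+1]}$.

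The strategy is to apply the inductive hypothesis twice, with a swap of one index. First, since the squarefree monomial $v_1 \cdots v_\ell$ lies in $J^{[\ell]} = I^{[\ell]}$, there are pairwise coprime $u_1, \ldots, u_\ell \in G(I)$ with $u_1 \cdots u_\ell \mid v_1 \cdots v_\ell$; in particular $v_{\ell+1}$ is coprime to every $u_i$. Next, consider the swapped product $v_{\ell+1} \cdot u_1 \cdots u_{\ell-1}$: it is a squarefree product of $\ell$ pairwise coprime elements of $J$ (using $G(I) \subseteq J$ and $v_{\ell+1} \in G(J)$), so it lies in the squarefree part of $J^\ell$, namely $J^{[\ell]} = I^{[\ell]}$. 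Invoking the hypothesis a second time, there exist pairwise coprime $u_1^\ast, \ldots, u_\ell^\ast \in G(I)$ with $u_1^\ast \cdots u_\ell^\ast \mid v_{\ell+1}\, u_1 \cdots u_{\ell-1}$.

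To finish, I would observe that $u_\ell$ is coprime to $v_{\ell+1}$ and to $u_1, \ldots, u_{\ell-1}$, hence to every $u_i^\ast$ (whose support sits inside $\supp(v_{\ell+1}) \cup \supp(u_1) \cup \cdots \cup \supp(u_{\ell-1})$). Thus $u_1^\ast, \ldots, u_\ell^\ast, u_\ell$ form a list of $\ell+1$ pairwise coprime elements of $G(I)$, and their product divides $v_{\ell+1}\, u_1 \cdots u_{\ell-1} \cdot u_\ell = v_{\ell+1}(u_1 \cdots u_\ell)$, which in turn divides $v_{\ell+1}\, v_1 \cdots v_\ell = m$. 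Hence $m \in I^{[\ell+1]}$, completing the induction.

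The main conceptual obstacle is that a generator $v_{\ell+1} \in G(J)$ need not lie in $I$, so one cannot hope to lift each $v_i$ to a generator of $I$ factorwise. The swap maneuver bypasses this: it absorbs the lone ``foreign'' factor $v_{\ell+1}$ together with only $\ell-1$ of the $u_i$'s into a shorter product, which the inductive hypothesis then rewrites inside $I^{[\ell]}$, while the leftover $u_\ell$ supplies the $(\ell+1)$-st coprime factor needed.
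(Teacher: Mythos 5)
Your proof is correct. It takes a genuinely different route from the paper's, which works entirely at the level of ideals: the paper introduces the squarefree product $L*M$ (the squarefree part of $LM$), extracts $I^{[\ell]}=I^{[\ell-1]}*J$ from the sandwich $I^{[\ell]}\subseteq I^{[\ell-1]}*J\subseteq J^{[\ell]}=I^{[\ell]}$, and then concludes with the one-line computation $I^{[\ell+1]}=I*(I^{[\ell-1]}*J)=(I*I^{[\ell-1]})*J=I^{[\ell]}*J=J^{[\ell]}*J=J^{[\ell+1]}$, implicitly using associativity of $*$ and the identity $L^{[m]}*L=L^{[m+1]}$. Your argument is the elementwise counterpart of that reassociation: replacing $v_1\cdots v_{\ell+1}$ by the swapped product $v_{\ell+1}u_1\cdots u_{\ell-1}$ trades the single ``foreign'' factor from $G(J)$ against $\ell-1$ factors from $G(I)$, so that the hypothesis $I^{[\ell]}=J^{[\ell]}$ can be applied a second time, with the leftover $u_\ell$ supplying the final coprime factor. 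What your version buys is that it is completely self-contained --- it never invokes the (easy but unproved) associativity of the squarefree product or the compatibility $J^{[\ell]}*J=J^{[\ell+1]}$ --- at the cost of being longer and requiring explicit support bookkeeping; that bookkeeping (in particular $\supp(u_i^\ast)\subseteq\supp(v_{\ell+1}u_1\cdots u_{\ell-1})$, which is disjoint from $\supp(u_\ell)$, and the final divisibility chain into $m$) is carried out correctly.
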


\begin{proof}
It suffices to show that $I^{[k+1]}=J^{[k+1]}$. For any two monomial ideals $L$ and $M$ we define the squarefree product, denoted by $L*M$,  as the squarefree part of $LM$. Since  $I^{[k]}\subseteq I^{[k-1]}*J\subseteq J^{[k]}=I^{[k]}$, it follows that $I^{[k]}= I^{[k-1]}*J$. Then 
\[
I^{[k+1]}=I*I^{[k]}=I*(I^{[k-1]}*J)=(I*I^{[k-1]})*J=I^{[k]}*J=J^{[k]}*J=J^{[k+1]}.
\]
\end{proof}

\begin{Example}\label{office}{\em
For $n\geq 4$,  let $P_n$ be the path graph with the vertex set $[n]=\{1,\ldots,n\}$ and edges $\{i,i+1\}$ for $i=1,\ldots,n-1$.  Let $G=P_n^c$ be the complementary graph of $P_n$ with the edge ideal $I$. By Corollary~\ref{nice},   
$\depth(S/I)>1$. In fact, we will show that $\depth(S/I)=2$. It suffices to show that  $\projdim(S/I)\geq n-2$. Indeed, $\{1,\ldots,n-2\}$ is a minimal vertex cover of $G$. By a well-known theorem of Terai \cite{T}, $\projdim(S/I)$ is equal to the regularity of the Alexander dual of $I$. Since the regularity of the dual ideal cannot be less than the maximum degree of its minimal generators, the assertion follows.

We claim that $\depth(S/I^{[k]})=2k-1$ for $2\leq k\leq \nu(G)$. Indeed,  if $a,b,c,d$ are pairwise distinct vertices  of $P_n$,  then we may assume  that $\{a,b\}$ and  $\{c,d\}$ are non-edges of $P_n$. Then  $\{a,b\}$ and  $\{c,d\}$ are  edges of $G$. 
This implies that $I^{[2]}=\mm^{[4]}$. Then $I^{[k]}=\mm^{[2k]}$ for all $k\geq 2$ follows from Proposition~\ref{powers equal}. Then, Example~\ref{ramsau}(a) yields the desired conclusion.}
\end{Example}

Let $I\subset S$ be a monomial  ideal with linear quotients. In other words, the elements of $G(I)$ can be ordered as $u_1,\ldots,u_s$ such that for all $i=2,\ldots,s$,  the colon ideal   $(u_1,\ldots,u_{i-1}):u_i$ is generated by variables. Let $r_i$ be the minimum number of variables generating this colon ideal. By \cite[Corollary 8.2.2]{HH} one has
\begin{eqnarray}
\label{lindepth}
\depth (S/I) =n-\max\{r_2,\ldots,r_s\}-1.
\end{eqnarray}

By the \emph{support} of a monomial $u$, denoted by $\mathrm{supp}(u)$, we mean the set of all $i$'s where $x_i$ divides $u$, and we put
$\supp(I)=\cup_{u\in G(I)} \supp (u)$. 

\medskip
In the sequel we will use the following lemma.

\begin{Lemma}
\label{squarefreecolon}
Let $I$ be a squarefree monomial ideal with $G(I)=\{u_1,\ldots,u_s\}$, and let $J=(u_1,\ldots, u_{s-1}):u_s$. If $v\in G(J)$, 
then $vu_s$ is squarefree. In particular, $v$ is squarefree and $\supp(v)\sect\supp(u_s)=\emptyset$. 
\end{Lemma}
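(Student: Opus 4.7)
The plan is to exploit the explicit description of colon ideals for monomial ideals and then observe that squarefreeness is preserved in the relevant operations. Recall that for monomial ideals one has the standard formula
\[
J=(u_1,\ldots,u_{s-1}):u_s=\sum_{i=1}^{s-1}(u_i):(u_s)=\Bigl(\frac{u_i}{\gcd(u_i,u_s)}\,:\,1\le i\le s-1\Bigr).
\]
Thus the monomials $w_i:=u_i/\gcd(u_i,u_s)$ form a generating set of $J$, and since the minimal monomial generating set of a monomial ideal is unique and contained in every monomial generating set, we have $G(J)\subseteq\{w_1,\ldots,w_{s-1}\}$.

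Now I would make the two key observations. First, each $w_i$ divides $u_i$, which is squarefree by hypothesis; hence $w_i$ is squarefree. Second, the product $w_i u_s = \lcm(u_i,u_s)$ is squarefree, because the least common multiple of two squarefree monomials is squarefree. In particular $\supp(w_i)\cap\supp(u_s)=\emptyset$, as any common variable would make $w_i u_s$ non-squarefree.

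Given an arbitrary $v\in G(J)$, the inclusion above forces $v=w_i$ for some $i$, so $vu_s$ is squarefree, $v$ itself is squarefree (being a divisor of $vu_s$), and $\supp(v)\cap\supp(u_s)=\emptyset$. All three assertions of the lemma follow.

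There is really no main obstacle here; the argument is a one-paragraph verification once one invokes the colon-ideal formula. The only subtle point worth mentioning is why we may replace the arbitrary generator $v$ by one of the explicit $w_i$'s, which is the uniqueness of the minimal monomial generating set and the fact that $G(J)$ must lie inside any monomial generating set of $J$.
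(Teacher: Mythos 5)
Your proof is correct. It takes a slightly different route from the paper's: you invoke the explicit description of the colon ideal, $J=\bigl(u_i/\gcd(u_i,u_s)\,:\,1\le i\le s-1\bigr)$, so that every minimal generator of $J$ is literally one of the monomials $w_i=u_i/\gcd(u_i,u_s)$, and the claims follow at once from $w_i\mid u_i$ and $w_iu_s=\lcm(u_i,u_s)$. The paper instead argues directly with an arbitrary $v\in G(J)$: from $u_j\mid vu_s$ it extracts a squarefree divisor $v'$ of $v$ with $\supp(v')\cap\supp(u_s)=\emptyset$ and $v'\in J$, and then uses minimality of $v$ to conclude $v'=v$. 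The two arguments rest on the same underlying facts (the structure of monomial colon ideals and the squarefreeness of the $u_i$), but yours yields the marginally stronger and more explicit conclusion that $G(J)\subseteq\{w_1,\ldots,w_{s-1}\}$, while the paper's avoids citing the colon-ideal formula and works from first principles with divisibility and minimality. Both are complete; your reduction of an arbitrary $v\in G(J)$ to one of the $w_i$ is justified exactly as you say, by uniqueness of the minimal monomial generating set.
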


\begin{proof}
Since $v\in J$, it follows that $u_j|vu_s$ for some $j=1,\ldots,s-1$. It follows that $u_j$ divides $w=\prod_{i\in \supp(vu_s)}x_i$ which is a squarefree monomial, because $u_j$ is squarefree. Since $u_s$ is squarefree, we have  $w=v'u_s$  where $v'$ is a squarefree monomial with $v'|v$ and $\supp(v')\sect \supp(u_s)=\emptyset$. Therefore,  $v'\in J$. Since $v\in G(J)$, it follows that $v'=v$. 
\end{proof}

Now we use~(\ref{lindepth}) to show the following theorem for matroidal ideals (i.e. squarefree polymatroidal ideals). See, for example, \cite[Section~12.6]{HHBook} for more properties of polymatroidal ideals.  

\begin{Theorem}
\label{matroid}
Let $I\subset S=K[x_1,\ldots,x_n]$ be a matroidal ideal with  $\supp(I)=[n]$. Then all squarefree powers of $I$ have minimum depth. 
\end{Theorem}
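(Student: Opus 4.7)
The plan is to reduce Theorem~\ref{matroid} to the intrinsic statement that any matroidal ideal $J\subset S$ with $\supp(J)=[n]$ and common generator degree $\tilde d$ satisfies $\depth(S/J)=\tilde d-1$.  Applied to $J=I^{[k]}$ (whose common generator degree is $d_k=dk$), this yields $\depth(S/I^{[k]})=d_k-1$, i.e.\ $g_I(k)=0$, which is the conclusion of the theorem.  Two ingredients are therefore needed: (i) the squarefree power $I^{[k]}$ is itself matroidal with $\supp(I^{[k]})=[n]$; (ii) the intrinsic depth statement above.

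For (i), the minimal generators of $I^{[k]}$ are in bijection with the disjoint unions of $k$ bases of the matroid $M$ associated to $I$.  By the Nash-Williams--Edmonds matroid union theorem, such disjoint unions are exactly the bases of the $k$-fold matroid union $M^{\vee k}$, provided $k\leq\nu(I)$ (which is precisely the range $I^{[k]}\neq 0$ from Proposition~\ref{prop:lower bound}(a)).  Hence $I^{[k]}$ is the matroidal ideal of $M^{\vee k}$.  Since an element is a loop of $M^{\vee k}$ iff it is a loop of $M$, and $\supp(I)=[n]$ forces $M$ to be loopless, one also obtains $\supp(I^{[k]})=[n]$.

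For (ii), matroidal (in fact polymatroidal) ideals are known to admit linear quotients in the reverse lexicographic order of their generators, so formula~(\ref{lindepth}) applies: $\depth(S/J)=n-1-\max_i r_i$.  Lemma~\ref{squarefreecolon} gives the upper bound $r_i\leq n-\tilde d$ for every $i$, since each generator of the colon $(v_1,\ldots,v_{i-1}):v_i$ must be a variable from $[n]\setminus\supp(v_i)$.  To produce a single $i$ at which equality holds, take $v_t$ to be the reverse-lex-smallest element of $G(J)$ and fix any $j\in[n]\setminus\supp(v_t)$.  Since $\supp(J)=[n]$, the element $j$ is not a loop of the underlying matroid, so basis exchange yields some $i\in\supp(v_t)$ with $w:=x_jv_t/x_i\in G(J)$.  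Were $i<j$, the symmetric difference $\{i,j\}$ of $\supp(w)$ and $\supp(v_t)$ would have its maximum $j\in\supp(w)\setminus\supp(v_t)$, making $w$ strictly reverse-lex-smaller than $v_t$, contradicting minimality.  Hence $i>j$, so $w$ precedes $v_t$ in the ordering and $x_j\in(v_1,\ldots,v_{t-1}):v_t$.  Letting $j$ range over $[n]\setminus\supp(v_t)$ shows $r_t=n-\tilde d$, whence $\depth(S/J)=n-1-(n-\tilde d)=\tilde d-1$.

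The decisive observation is in (ii): basis exchange alone only guarantees some valid swap, and reverse-lex minimality of $v_t$ is exactly the hypothesis that rules out the wrong direction $i<j$.  This is what saturates the colon ideal $(v_1,\ldots,v_{t-1}):v_t$ to $(x_j:j\notin\supp(v_t))$ and delivers the matching lower bound on $\depth(S/J)$; granting the matroid-union description of $I^{[k]}$ in step (i), everything else is routine bookkeeping.
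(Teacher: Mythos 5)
Your proposal is correct, and the two halves of it compare differently with the paper. The depth computation (your step (ii)) is essentially the paper's Proposition~\ref{depthmatroid}: both use that matroidal ideals have linear quotients, invoke Lemma~\ref{squarefreecolon} for the upper bound $r_i\leq n-\tilde d$, and then use basis exchange at a distinguished generator to saturate the colon ideal. The only cosmetic difference is that you work with the explicit reverse lexicographic order and argue directly that the exchanged generator $w$ is rev-lex larger than $v_t$; the paper simply takes the \emph{last} generator $u_s$ of an arbitrary linear quotients order, so that any exchanged generator automatically lies among $u_1,\ldots,u_{s-1}$ and no case analysis on the direction of the swap is needed. Where you genuinely diverge is step (i): the paper deduces that $I^{[k]}$ is matroidal of degree $kd$ by combining the theorem that $I^k$ is polymatroidal with its Proposition~\ref{sqpoly} (the squarefree part of a polymatroidal ideal is matroidal), whereas you identify $I^{[k]}$ with the basis ideal of the $k$-fold matroid union $M^{\vee k}$ via Nash-Williams--Edmonds. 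Your route imports a heavier combinatorial theorem but buys two things: a transparent description of which matroid realizes $I^{[k]}$, and an explicit verification that $\supp(I^{[k]})=[n]$ (no element of a loopless matroid is a loop of the union), a hypothesis that Proposition~\ref{depthmatroid} requires and that the paper's two-line deduction of Theorem~\ref{matroid} leaves implicit. Both arguments are sound.
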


The proof of Theorem~\ref{matroid} will follow  immediately from the next two results. 

\begin{Proposition}
\label{depthmatroid}
Let $I\subset S=K[x_1,\ldots,x_n]$ be a matroidal ideal with  $\supp(I)=[n]$. Suppose that $I$ is generated in degree $d$. Then $\depth (S/I)=d-1$. 
\end{Proposition}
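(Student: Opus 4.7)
The plan is to prove the equality $\depth(S/I) = d-1$ by establishing the two inequalities separately. The lower bound $\depth(S/I) \geq d-1$ is immediate from Proposition~\ref{prop:lower bound}(b) applied with $k=1$, since $I$ being generated in degree $d$ forces $d_1 = d$. For the upper bound I would exploit the classical fact that every matroidal (more generally, polymatroidal) ideal has linear quotients. Fix any linear quotients ordering $u_1, \ldots, u_s$ of $G(I)$; formula~(\ref{lindepth}) then gives
\[
\depth(S/I) = n - 1 - \max\{r_2, \ldots, r_s\},
\]
where $r_i$ is the minimum number of variables generating the colon ideal $(u_1, \ldots, u_{i-1}):u_i$. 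By Lemma~\ref{squarefreecolon} each such variable lies outside $\supp(u_i)$, so $r_i \leq n - d$ for every $i$. It remains to attain equality at $i = s$.

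Write $B := \supp(u_s)$, and fix $j \in [n]\setminus B$. Since $\supp(I) = [n]$, the underlying matroid $M$ admits a basis $B^*$ containing $j$. Applying the matroid basis exchange axiom to $B$ and $B^*$ with $j \in B^* \setminus B$ produces an index $\ell \in B \setminus B^*$ such that $B' := (B \setminus \{\ell\}) \cup \{j\}$ is again a basis of $M$. The corresponding generator $v := x^{B'}$ lies in $G(I)$ and is distinct from $u_s$, hence $v$ appears among $u_1, \ldots, u_{s-1}$; since $v \mid x_j u_s$, this places $x_j \in (u_1, \ldots, u_{s-1}):u_s$. Letting $j$ range over $[n]\setminus B$ yields $r_s \geq n - d$, and combined with the reverse inequality we obtain $r_s = n - d$. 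Plugging this back into formula~(\ref{lindepth}) gives $\depth(S/I) = n - 1 - (n - d) = d - 1$, as claimed.

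The conceptual payoff, and what I expect to be the only subtle point, is that the above argument works for the last element of \emph{any} linear quotients ordering: each single-exchange partner $B'$ of $u_s$ is automatically a generator of $I$ distinct from $u_s$, hence already present among $u_1, \ldots, u_{s-1}$. This observation lets one sidestep the delicate question of committing to a specific term order compatible with the matroid structure, and reduces the whole proof to two ingredients---linear quotients of matroidal ideals and the basis exchange axiom---together with the combinatorial bookkeeping supplied by Lemma~\ref{squarefreecolon}.
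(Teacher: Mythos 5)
Your proposal is correct and follows essentially the same route as the paper: linear quotients of matroidal ideals, the formula~(\ref{lindepth}), Lemma~\ref{squarefreecolon} to get $r_i\leq n-d$, and the basis exchange property applied to the last generator $u_s$ (using $\supp(I)=[n]$ to find a basis through each $j\notin\supp(u_s)$) to get $r_s=n-d$. The only cosmetic difference is that you phrase the exchange step via matroid bases while the paper phrases it via the exchange property of matroidal ideals; these are the same argument.
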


\begin{proof}
By \cite[Lemma 1.3]{HT} we know that $I$ has linear quotients. Let $u_1,\ldots,u_s$ be a linear quotients order for the elements of $G(I)$. By Lemma~\ref{squarefreecolon}, we have $r_i\leq n-d$ for all $i$. Let  $A=\{i\: i\notin \supp(u_s)\}$. Then $|A|=n-d$. Let $i\in A$. Then  there exists $u_j$ such that $x_i|u_j$.   By the exchange property of matroidal ideals, 
there exists $k$ such that $x_k|u_s$, $x_k\nmid u_j$ and  $x_i(u_s/x_k)\in I$. This implies that $x_i\in (u_1,\ldots,u_{s-1}):u_s$. Hence, $r_s =n-d$ and (\ref{lindepth}) implies that  $\depth(S/I)= n-(n-d)-1=d-1$, as desired.
\end{proof}

\begin{Proposition}
\label{sqpoly}
The squarefree part of a polymatroidal ideal is a matroidal ideal. 
\end{Proposition}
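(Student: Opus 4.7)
The plan is to verify directly the exchange property for the squarefree part $J$ of a polymatroidal ideal $I$. Since $I$ is generated in a single degree $d$, every squarefree generator of $I$ also has degree $d$, so $J$ is generated in the single degree $d$ by squarefree monomials; the case $J=0$ is trivial, so I assume $G(J) \neq \emptyset$. What must be checked is the matroidal exchange property: for any two $u,v \in G(J)$ and any $i$ with $\deg_{x_i}(u) > \deg_{x_i}(v)$, there exists $j$ with $\deg_{x_j}(u) < \deg_{x_j}(v)$ such that $x_j(u/x_i) \in G(J)$.

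I would start with $u,v \in G(J)$ and an index $i$ with $\deg_{x_i}(u) > \deg_{x_i}(v)$. Since $u$ is squarefree, this forces $\deg_{x_i}(u) = 1$ and $\deg_{x_i}(v) = 0$. Applying the polymatroidal exchange property of $I$ to the generators $u,v \in G(I)$, I obtain an index $j$ with $\deg_{x_j}(u) < \deg_{x_j}(v)$ such that $w := x_j(u/x_i) \in G(I)$. The key remaining point is to check that $w$ is squarefree so that $w \in G(J)$. Because $v$ is squarefree and $\deg_{x_j}(v) > \deg_{x_j}(u) \geq 0$, one has $\deg_{x_j}(v) = 1$ and $\deg_{x_j}(u) = 0$, so $j \notin \supp(u)$; the monomial $u/x_i$ is squarefree with support $\supp(u) \setminus \{i\}$, and multiplying by $x_j$ adds a variable missing from this set. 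Therefore $w$ is squarefree, hence in $G(J)$, establishing the exchange property.

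The argument is essentially bookkeeping of the $x_i$- and $x_j$-degrees. The only subtlety, and hence the only potential obstacle, is that the polymatroidal exchange property only asserts that $x_j(u/x_i)$ belongs to $G(I)$, not automatically to the squarefree part; squarefreeness of the exchanged generator must be deduced from the squarefreeness of $u$ and $v$. Once this observation is in place, the matroidal property of $J$ follows at once, completing the proof.
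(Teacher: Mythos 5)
Your proof is correct and follows essentially the same route as the paper: apply the polymatroidal exchange property to two squarefree generators and observe that the exchanged monomial $x_j(u/x_i)$ is again squarefree, hence lies in the squarefree part. You spell out the degree bookkeeping (that $\deg_{x_j}(u)=0$ forces squarefreeness of the new generator) slightly more explicitly than the paper does, but the argument is the same.
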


\begin{proof}
Let $I$ be a polymatroidal ideal with $G(I)=\{u_1, \ldots, u_s\}$ and let $I'$ be the squarefree part of $I$ with $G(I')=\{v_1, \ldots, v_r\}$.  Let $v_i, v_j, x_k$ satisfy $x_k|v_i, x_k \nmid v_j$.  Since $I$ is polymatroidal, there is $x_\ell$ with $x_\ell \nmid v_i, x_\ell | v_j$ for which $x_\ell(v_i/x_k) \in G(I)$.  Since $x_\ell(v_i /x_k)$ is squarefree, it follows that  $x_\ell(v_i /x_k) \in G(I')$.  Thus $I'$ is a matroidal ideal, as desired.
\end{proof}

\begin{proof}({\em Theorem~\ref{matroid}})
By \cite[Theorem 12.6.3]{HH}, $I^k$  is a polymatroidal ideal generated in degree $kd$. Proposition~\ref{sqpoly} implies that $I^{[k]}$ is a matroidal ideal generated in degree $kd$. Thus, Proposition~\ref{depthmatroid} completes the proof. 
\end{proof}

\section{Minimum depth for squarefree powers of edge ideals}\label{Sec:cover criterion}
In this section, we give a characterization of squarefree powers of edge ideals which have minimum depth with respect to reduced homologies of a certain simplicial complex. In particular, we give a more explicit classification of all edge ideals which have minimum depth. We show that all squarefree powers of such ideals have minimum depth as well. Moreover, we give a sufficient condition for squarefree powers of edge ideals in terms of the so-called dominating cliques to have minimum depth. 

Recall that a {\em simplicial complex} $\Delta$ on a finite vertex set $V(\Delta)$ is a set of subsets of $V(\Delta)$ such that $\{v\}\in \Delta$ for every $v\in V(\Delta)$ and if $F\in \Delta$, then $G\in \Delta$ for every $G\subseteq F$. An element $F\in \Delta$ is a {\em face} of $\Delta$ and a {\em facet} is a face of $\Delta$ which is maximal with respect to inclusion. The set of all facets of $\Delta$ is denoted by $\facets(\Delta)$. If $\facets(\Delta)=\{F_1,\dots ,F_q\}$, then we write $\Delta=\langle F_1,\dots ,F_q\rangle$. 

Now, we define a simplicial complex related to matchings of a graph. Let $G$ be a graph and $k=1,\ldots,\nu(G)$. Then, we define 
\[
\Gamma_k(G)=\{F\subseteq V(G): V(M)\not \subseteq F~\text{for any}~k\text{-matching}~M~\text{of}~G\}.
\]
It is easily seen that $\Gamma_k(G)$ is a simplicial complex on $V(G)$ whose Stanley-Reisner ideal is $I(G)^{[k]}$. In particular, $\Gamma_1(G)$ is the well-known independence complex of $G$ (or equivalently the clique complex of the complementary graph $G^c$ of $G$) whose Stanley-Reisner ideal is the edge ideal of $G$.     

\medskip
We know from Proposition~\ref{prop:lower bound} that for any graph $G$, the depth of $S/I(G)^{[k]}$ is at least~$2k-1$. In the following proposition, we give an equivalent condition for attaining this lower bound. 

\begin{Proposition}\label{reduced homology}
	Let $G$ be a graph and $k=1,\ldots,\nu(G)$. Then the following statements are equivalent:
	\begin{enumerate}
		\item[(i)] $g_{I(G)}(k)=0$.
		\item[(ii)] $\tilde{H}_{2k-2}(\Gamma_k(G);K)\neq 0$.
	\end{enumerate}
\end{Proposition}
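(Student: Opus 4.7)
My plan is to reduce the equivalence to the non-vanishing of a single graded Betti number, and then to recognize that Betti number as $\dim_K \tilde{H}_{2k-2}(\Gamma_k(G);K)$ via Hochster's formula. Every minimal generator of $I(G)^{[k]}$ is a product of $k$ pairwise vertex-disjoint edges and is therefore squarefree of degree $2k$, so $d_k = 2k$. By Auslander--Buchsbaum, condition (i) is equivalent to $\projdim(S/I(G)^{[k]}) = n - 2k + 1$; and Proposition~\ref{prop:lower bound} already supplies the upper bound $\projdim(S/I(G)^{[k]}) \leq n - 2k + 1$, so the remaining task is to match the lower bound $\projdim(S/I(G)^{[k]}) \geq n - 2k + 1$ with condition (ii).

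The key observation, which is already recorded inside the proof of Proposition~\ref{prop:lower bound}, is that $\beta_{i,j}(S/I(G)^{[k]}) \neq 0$ with $i \geq 1$ forces $j \geq d_k + i - 1 = 2k + i - 1$; this is a minimality-of-resolution bound for an ideal whose generators all lie in the single degree $2k$. Specializing to $i = n - 2k + 1$ gives $j \geq n$, and Hochster's formula only produces contributions with $j \leq n$, so the only possible position for a top-row Betti number is $(i,j) = (n - 2k + 1,\, n)$. Hence $\projdim(S/I(G)^{[k]}) \geq n - 2k + 1$ if and only if $\beta_{n-2k+1,\,n}(S/I(G)^{[k]}) \neq 0$.

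Since $\Gamma_k(G)$ is the Stanley--Reisner complex of $I(G)^{[k]}$, Hochster's formula evaluated at the unique subset $W = [n]$ of cardinality $n$ identifies this Betti number with
\[
\beta_{n-2k+1,\,n}\bigl(S/I(G)^{[k]}\bigr) \;=\; \dim_K \tilde{H}_{n - (n-2k+1) - 1}\bigl(\Gamma_k(G);K\bigr) \;=\; \dim_K \tilde{H}_{2k-2}\bigl(\Gamma_k(G);K\bigr),
\]
which delivers the equivalence (i) $\iff$ (ii). The only substantive move is the collapse of the entire top row of the Betti table onto the single column $j = n$; everything else is a combination of Hochster and Auslander--Buchsbaum. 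I do not anticipate a serious obstacle, because the degree bound $j \geq 2k + i - 1$ is precisely strong enough to eliminate contributions from the induced subcomplexes $\Gamma_k(G)_W$ with $W \subsetneq [n]$, which would otherwise have to be ruled out individually.
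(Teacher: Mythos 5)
Your proof is correct and follows essentially the same route as the paper: Auslander--Buchsbaum reduces (i) to $\projdim(S/I(G)^{[k]})=n-2k+1$, the generation of $I(G)^{[k]}$ in the single degree $2k$ together with the squarefree bound $j\leq n$ pins any Betti number in homological degree $n-2k+1$ to internal degree $n$, and Hochster's formula at $W=[n]$ identifies $\beta_{n-2k+1,n}$ with $\dim_K\tilde{H}_{2k-2}(\Gamma_k(G);K)$. You merely spell out the degree-collapse argument that the paper compresses into the phrase ``since $I(G)^{[k]}$ is generated in degree $2k$.''
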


\begin{proof}
	By the Auslander-Buchsbaum formula, we know that (i) is equivalent to  $\projdim(S/I(G)^{[k]})=n-2k+1$ which holds if and only if $\beta_{n-2k+1,n}(S/I(G)^{[k]})\neq 0$, since $I(G)^{[k]}$ is generated in degree~$2k$. By Hochster's formula, we know that 
	\[
	\beta_{n-2k+1,n}(S/I(G)^{[k]})=\dim_K \tilde{H}_{2k-2}(\Gamma_k(G);K).
	\] 
	Therefore, $\beta_{n-2k+1,n}(S/I(G)^{[k]})\neq 0$ if and only if $\tilde{H}_{2k-2}(\Gamma_k(G);K)\neq 0$, and hence the desired result follows.    
\end{proof} 

The next corollary shows that an edge ideal has minimum depth if and only if the complementary graph of $G$ is disconnected.

\begin{Corollary}\label{prop:depth 1}
	Let $G$ be a graph. Then $\depth(S/I(G))=1$ if and only if $G^c$ is disconnected.
\end{Corollary}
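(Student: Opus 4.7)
The plan is to derive this corollary as the specialization of Proposition \ref{reduced homology} to $k=1$, combined with the elementary topological fact that a simplicial complex has non-vanishing reduced zeroth homology exactly when it is disconnected. Since the edge ideal $I(G)$ is generated in degree~$2$, we have $d_1=2$, so $g_{I(G)}(1)=\depth(S/I(G))-1$; hence the equation $\depth(S/I(G))=1$ is equivalent to $g_{I(G)}(1)=0$, which by Proposition \ref{reduced homology} is in turn equivalent to $\tilde H_0(\Gamma_1(G);K)\neq 0$.

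It therefore remains to identify the non-vanishing of $\tilde H_0(\Gamma_1(G);K)$ with the disconnectedness of $G^c$. By definition, the faces of $\Gamma_1(G)$ are the subsets of $V(G)$ containing no edge of $G$, i.e., the independent sets of $G$, equivalently the cliques of $G^c$. In particular every singleton is a face of $\Gamma_1(G)$, and a pair $\{u,v\}$ is a face iff $\{u,v\}\notin E(G)$ iff $\{u,v\}\in E(G^c)$. Thus the $1$-skeleton of $\Gamma_1(G)$ is literally the graph~$G^c$. Because the connected components of a simplicial complex are already determined by its $1$-skeleton, $\Gamma_1(G)$ is disconnected if and only if $G^c$ is disconnected; and since the dimension of $\tilde H_0(\Delta;K)$ equals the number of connected components of $\Delta$ minus one, this is precisely the condition $\tilde H_0(\Gamma_1(G);K)\neq 0$.

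I expect no real obstacle: once Proposition \ref{reduced homology} has been invoked, the argument reduces to unwinding the definition of the independence complex and applying the standard dictionary between $\tilde H_0$ and connectedness. The only mild point to be mindful of is the implicit assumption that $I(G)\neq 0$, so that $d_1=2$ is defined and the baseline inequality $\depth(S/I(G))\geq 1$ from Proposition \ref{prop:lower bound} is in force; this is automatic whenever the statement of the corollary is non-vacuous.
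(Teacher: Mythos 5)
Your proof is correct and follows essentially the same route as the paper: apply Proposition \ref{reduced homology} with $k=1$, then identify the non-vanishing of $\tilde H_0(\Gamma_1(G);K)$ with the disconnectedness of $\Gamma_1(G)$, which coincides with that of $G^c$ since $\Gamma_1(G)$ is the clique complex of $G^c$. The extra care you take about the $1$-skeleton and the non-vacuousness of the statement is fine but not needed beyond what the paper already does.
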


\begin{proof}
	Applying Proposition~\ref{reduced homology} for $k=1$, we have $\depth(S/I(G))=1$ if and only if $\tilde{H}_{0}(\Gamma_1(G);K)\neq 0$ which is equivalent to $\Gamma_1(G)$ being disconnected. The latter is also equivalent to $G^c$ being disconnected, since $\Gamma_1(G)$ is the clique complex of $G^c$. Thus, we get the desired conclusion. 
\end{proof}

Next, we recall some definitions about simplicial complexes and their facet ideals and we fix some notation which will be used in the rest of the section.

A {\em subcollection} of a simplicial complex $\Delta$ is a simplicial complex $\Gamma$ such that every facet of $\Gamma$ is also a facet of $\Delta$. If $A\subseteq V(\Delta)$, then the {\em induced subcollection} $\Delta_A$ is the simplicial complex $\langle F\in \facets(\Delta) \mid F\subseteq A \rangle $.

A set $D\subseteq \facets(\Delta)$ is called a {\em facet cover} of $\Delta$ if every vertex $v$ of $\Delta$ belongs to some $F$ in $D$. A facet cover is called \emph{minimal} if no proper subset of it is a facet cover of $\Delta$.

Let $\Delta$ be a simplicial complex on the vertices $x_1,\ldots, x_n$.
Recall that the {\em facet ideal} of $\Delta$, denoted by $\mathcal{F}(\Delta)$, is the squarefree monomial ideal 
\[ 
\mathcal{F}(\Delta)=(x_{i_1}\cdots x_{i_k} \mid \{x_{i_1},\ldots, x_{i_k}\}\in \facets(\Delta))\subset S.
\]
If $u$ is a squarefree monomial, then $\Delta_u$ denotes the subcollection $\Delta_U$ where $U$ is the set of variables which divide $u$. 

Erey and Faridi in \cite{EF} introduced the concept of well-ordered facet cover to give a non-vanishingness condition for Betti numbers of facet ideals (see \cite[Definition~3.1]{EF}). Well-ordered facet covers generalize the concept of strongly disjoint bouquets which was introduced by Kimura \cite{K}.

A sequence $F_1,\ldots,F_k$ of facets of a simplicial complex $\Delta$ is called a {\em well-ordered facet cover} if $\{F_1,\ldots,F_k\}$ is a minimal facet cover of $\Delta$ and for every facet $H\notin \{F_1,\ldots,F_k\}$ of $\Delta$ there exists $i\leq k-1$ such that  $F_i \subseteq H \cup F_{i+1} \cup F_{i+2}\cup \cdots \cup F_k.$

Existence of well-ordered facet covers yields non-zero Betti numbers as follows:

\begin{Theorem}\cite[Corollary 3.4]{EF}\label{thm:erey faridi}
	Let $\Delta$ be a simplicial complex and let $u$ be a squarefree
	monomial. If $\Delta_{u}$ has a well-ordered facet cover of cardinality $i$, then $\beta_{i,u}(S/\mathcal{F}(\Delta)) \neq 0$.
\end{Theorem}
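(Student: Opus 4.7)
The plan is to prove the theorem via Hochster's formula combined with combinatorial Alexander duality, reducing to the construction of an explicit nontrivial spherical cycle built from the well-ordered facet cover. Set $W := \supp(u)$ and write $\mathcal{F}(\Delta) = I_\Sigma$, where $\Sigma$ is the Stanley--Reisner complex on $V(\Delta)$ whose minimal non-faces are exactly the facets of $\Delta$. Hochster's formula for the multigraded Betti numbers of a squarefree monomial ideal gives
$$\beta_{i,u}(S/\mathcal{F}(\Delta)) = \dim_K \tilde{H}_{|W|-i-1}(\Sigma_W; K),$$
and combinatorial Alexander duality rewrites this as $\dim_K \tilde{H}_{i-2}((\Sigma_W)^\vee; K)$. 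Unwinding the definitions identifies $(\Sigma_W)^\vee$ with the simplicial complex on vertex set $W$ whose facets are precisely $\{W \setminus F : F \in \facets(\Delta_u)\}$, so the task becomes: produce a nonzero class in $\tilde{H}_{i-2}((\Sigma_W)^\vee; K)$.

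Let $F_1, \ldots, F_i$ be the given well-ordered facet cover and let $\Lambda \subseteq (\Sigma_W)^\vee$ be the subcomplex generated by the facets $W \setminus F_1, \ldots, W \setminus F_i$. Minimality of the cover gives $\bigcap_{j\in J}(W\setminus F_j) = W \setminus \bigcup_{j \in J} F_j \neq \emptyset$ for all $J \subsetneq [i]$ but $= \emptyset$ for $J = [i]$, and each nonempty intersection is a simplex, hence contractible. The Nerve Lemma then identifies $\Lambda$ with $\partial \Delta^{i-1} \simeq S^{i-2}$, so $\tilde{H}_{i-2}(\Lambda; K) = K$ is generated by an explicit fundamental class $[z]$.

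It remains to show $[z]$ survives in the ambient complex $(\Sigma_W)^\vee$. Enumerate the remaining facets $W \setminus H_1, \ldots, W \setminus H_t$ (with each $H_\ell \in \facets(\Delta_u) \setminus \{F_1,\ldots,F_i\}$) and form the filtration $\Lambda = \Lambda_0 \subset \Lambda_1 \subset \cdots \subset \Lambda_t = (\Sigma_W)^\vee$ by adjoining one facet at a time. The well-ordered hypothesis applied to $H_\ell$ furnishes an index $j = j(\ell) \leq i-1$ with $F_j \subseteq H_\ell \cup F_{j+1} \cup \cdots \cup F_i$; complementing in $W$ this becomes
$$(W\setminus H_\ell) \cap (W\setminus F_{j+1}) \cap \cdots \cap (W\setminus F_i) \subseteq W\setminus F_j.$$
This containment says that the intersection of the new facet $W\setminus H_\ell$ with the \emph{upper} facets $W\setminus F_{j+1},\ldots,W\setminus F_i$ already lies inside an existing facet $W\setminus F_j$ of $\Lambda_{\ell-1}$, so the new simplex $W\setminus H_\ell$ is glued onto $\Lambda_{\ell-1}$ along a subcomplex contained in a single simplex of $\Lambda_{\ell-1}$. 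A Mayer--Vietoris (or relative-homology) computation then shows each inclusion $\Lambda_{\ell-1} \hookrightarrow \Lambda_\ell$ is injective on $\tilde{H}_{i-2}$; iterating yields $[z] \neq 0$ in $\tilde{H}_{i-2}((\Sigma_W)^\vee; K)$.

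The main obstacle is this last step: making precise, via Mayer--Vietoris, how the well-ordered containment guarantees that the attaching subcomplex of each new facet $W\setminus H_\ell$ is contractible and embedded inside an existing simplex of $\Lambda_{\ell-1}$, so that the $(i-2)$-cycle $[z]$ is preserved at every stage of the filtration. One expects that a careful order-by-order analysis, exploiting the one-sided nature of the well-ordered containment (only the indices $j+1,\ldots,i$ appear on the right), is what allows the induction to close without extra hypotheses.
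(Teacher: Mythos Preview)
The paper does not prove this statement; it is cited from \cite[Corollary~3.4]{EF} and invoked as a black box, so there is no proof in the paper to compare yours against.

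Your reduction via Hochster's formula and Alexander duality is correct, and the Nerve Lemma identification of $\Lambda=\langle W\setminus F_1,\ldots,W\setminus F_i\rangle$ with $\partial\Delta^{i-1}$ is fine. The gap is the filtration step, and it is a genuine one, not merely a missing detail. From the well-ordered hypothesis you correctly derive
\[
(W\setminus H_\ell)\cap(W\setminus F_{j+1})\cap\cdots\cap(W\setminus F_i)\subseteq W\setminus F_j,
\]
but this controls only the \emph{simultaneous} intersection of $W\setminus H_\ell$ with the upper facets. The attaching locus $A_\ell=(W\setminus H_\ell)\cap\Lambda_{\ell-1}$ is the \emph{union} of the pairwise intersections of $W\setminus H_\ell$ with every facet already present---including the lower facets $W\setminus F_1,\ldots,W\setminus F_j$ and all previously adjoined $W\setminus H_m$---and this union need not sit inside a single simplex. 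Concretely, take $W=\{1,\ldots,5\}$, $F_1=\{1,2\}$, $F_2=\{2,3,4\}$, $F_3=\{4,5\}$ (a minimal cover) and $H=\{1,3\}$; the well-ordered condition holds with $j=1$, yet the attaching locus of $W\setminus H=\{2,4,5\}$ in $\Lambda$ is the disjoint union of the edge $\{4,5\}\subset W\setminus F_1$ and the isolated vertex $\{2\}\subset W\setminus F_3$, which lies in no single facet of $\Lambda$ and has $\tilde H_0\neq 0$. So the premise of your Mayer--Vietoris step---that $A_\ell$ is a cone inside one existing simplex---is simply false in general.

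The overall strategy may be salvageable, but not along the line you sketch. What you would actually need is that the image of $\tilde H_{i-2}(A_\ell)$ in $\tilde H_{i-2}(\Lambda_{\ell-1})$ avoids the specific class $[z]$, and that requires tracking $[z]$ itself through the filtration rather than arguing that each $A_\ell$ is contractible. The one-sided, ordered nature of the well-ordered condition is certainly the key ingredient, but turning it into a statement about the full attaching locus (as opposed to a single intersection of complements) is exactly the missing idea, and your last paragraph's ``one expects'' is doing work that the displayed containment cannot do.
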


We need the following technical lemma which will be useful in the sequel.

\begin{Lemma}\label{lem:choose edges}
	Let $G$ be a graph with no isolated vertices and $\nu(G)\geq 2$. Suppose that $G$ is not a complete bipartite graph and $G^c$ is disconnected, and let $2\leq k\leq \nu(G)$. Then there exists two induced subgraphs $G_1$ and $G_2$ on disjoint sets of vertices, a $(k-1)$-matching $M$ and a vertex $v$ of $G$ such that
	\begin{enumerate}
		\item $V(G_1)\cup V(G_2)=V(G)$,
		\item $\{x_1,x_2\}\in E(G)$ for all $x_1\in V(G_1)$ and $x_2\in V(G_2)$,
		\item $\displaystyle v\notin e$ for every $e\in M$,
		\item $M\cap E(G_1)\neq \emptyset$ and
		\item $v\in V(G_2)$.
	\end{enumerate} 
\end{Lemma}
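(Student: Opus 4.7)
The plan is two-stage: first fix a partition $V(G)=V(G_1)\sqcup V(G_2)$ giving the required join structure with $G_1$ containing an edge, then extract $M$ and $v$ from a maximum matching of $G$.

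For the partition, let $C_1,\dots,C_t$ be the connected components of $G^c$, so $t\ge 2$. Any grouping of these into two nonempty parts $A\sqcup B=V(G)$ satisfies conditions (1) and (2), since vertices in distinct components of $G^c$ are adjacent in $G$. I will choose the grouping so that $G_1:=G[A]$ contains an edge. If $t\ge 3$, put at least two components inside $A$; inter-component edges of $G$ then lie in $G_1$. If $t=2$, set $A=V(C_1)$ and $B=V(C_2)$, and note that if both $C_1$ and $C_2$ were cliques in $G^c$, then $G$ would be $K_{|C_1|,|C_2|}$, contradicting the hypothesis; after possibly swapping labels, $C_1$ is not a clique in $G^c$, which furnishes an edge of $G$ inside $A$.

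Fix a maximum matching $N$ of $G$, so $|N|\ge k$. In the easy case, $N$ already contains some $e_1\in E(G_1)$: if $N$ misses a vertex $v\in V(G_2)$, form $M$ by taking any $(k-1)$-subset of $N$ containing $e_1$; otherwise $N$ covers $V(G_2)$, so some edge $f\in N\setminus\{e_1\}$ has $f\cap V(G_2)\neq\emptyset$ (because $e_1\subseteq V(G_1)$), and then I choose $v\in f\cap V(G_2)$ and take $M$ to be a $(k-1)$-subset of $N\setminus\{f\}$ containing $e_1$.

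In the hard case, $N$ has no edge in $E(G_1)$. Pick any $e=\{u,w\}\in E(G_1)$; maximality of $N$ forbids both $u,w$ from being unmatched. If exactly one of them is matched in $N$, swapping its $N$-incident edge for $e$ yields a maximum matching containing $e$, which reduces to the easy case. If both are matched, say $\{u,x\},\{w,y\}\in N$, and $\{x,y\}\in E(G)$, the double swap $\{\{u,x\},\{w,y\}\}\leftrightarrow\{e,\{x,y\}\}$ again reduces to the easy case. The genuine obstacle is when $\{x,y\}\notin E(G)$: then $x$ and $y$ must lie in the same part of the join, and since no edge of $N$ is in $E(G_1)$ neither can sit in $V(G_1)$, forcing $x,y\in V(G_2)$. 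Setting $N'':=(N\setminus\{\{u,x\},\{w,y\}\})\cup\{e\}$ produces a matching of size $|N|-1\ge k-1$ containing $e$ with $x,y$ uncovered, and I take $M$ to be a $(k-1)$-subset of $N''$ containing $e$ together with $v:=x$. The delicate point is that this swap costs one edge of the matching, which is tight only when $\nu(G)=k$; what saves the argument is that the same swap automatically uncovers two vertices of $V(G_2)$, supplying $v$ for free in exactly the situation where there is no room to drop further edges.
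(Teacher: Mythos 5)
Your proof is correct, and it takes a genuinely different route from the paper's. The paper fixes an arbitrary join decomposition coming from the disconnectedness of $G^c$ and then case-analyzes a $k$-matching directly: either some $k$-matching already has an edge inside a part (their Case 1), or every edge of the $k$-matching crosses the partition, in which case they examine whether the endpoints on one side span an edge (Case 2.1) or both sides are independent (Case 2.2, where the non-complete-bipartite hypothesis produces an edge $f$ inside a part that gets swapped in). You instead front-load all the combinatorial hypotheses into the choice of partition -- using the component structure of $G^c$ together with non-complete-bipartiteness to guarantee from the outset that $G_1$ contains an edge -- and then run a standard exchange argument on a \emph{maximum} matching: one or two local swaps force an edge of $G_1$ into the matching, and your key observation that the only lossy swap (dropping from $|N|$ to $|N|-1$ edges) simultaneously uncovers two vertices of $V(G_2)$ is exactly what supplies $v$ when $k=\nu(G)$ leaves no slack. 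Your version cleanly isolates where each hypothesis is used (the non-bipartite assumption only enters the partition stage, maximality of $N$ only enters the swap stage), at the cost of invoking a maximum matching rather than working with an arbitrary $k$-matching as the paper does; both arguments are elementary and of comparable length. All the small verifications in your write-up check out: the swapped families are genuinely matchings because $\{u,x\}$ and $\{w,y\}$ are distinct, hence disjoint, edges of $N$; the vertex $x$ lies in $V(G_2)$ because otherwise $\{u,x\}$ would be an edge of $N$ inside $G_1$; and the cardinality counts $|N|-1\geq k-1$ and $|N|\geq k$ are exactly what is needed to extract the $(k-1)$-subset containing the distinguished edge.
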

\begin{proof}
	Since $G^c$ is disconnected, there exist two induced subgraphs $G_1$ and $G_2$ of $G$, on disjoint sets of vertices, satisfying conditions~$(1)$ and~$(2)$. By the symmetry, it suffices to consider the following cases:
	
	Case 1: Suppose that there exists a matching $\{e_1,\dots ,e_k\}$ of $G$ such that $e_1\in E(G_1)$. Let $v$ be a vertex of $G_2$. If there exists $t\in \{2,\dots ,k-1\}$ with $v\in e_t$, then let $M=\{e_1,\dots ,e_k\}\setminus \{e_t\}$. Otherwise, let $M=\{e_1,\dots ,e_{k-1}\}$. In both cases, $v$ and $M$ fulfill the desired conditions.  
	
	Case 2: Suppose that no $k$-matching of $G$ has an edge contained in $G_1$ or $G_2$. Let $\{e_1,\dots ,e_k\}$ be a matching of $G$ with $e_i=\{a_i,b_i\}$ such that $a_i\in V(G_1)$ and $b_i\in V(G_2)$. Without loss of generality, it is enough to consider the following cases:
	
	Case 2.1: Suppose that there is an edge $e$ of $G$ such that $e\subseteq \{a_1,\dots ,a_k\}$. We may assume that $e=\{a_{k-1},a_k\}$. Then $v=b_k$ and $M=\{e_1,\dots ,e_{k-2},e\}$ satisfy the required conditions.
	
    Case 2.2: Suppose that both $A=\{a_1,\dots ,a_k\}$ and $B=\{b_1,\dots ,b_k\}$ are independent sets of $G$. Since $G$ is not a complete bipartite graph, we may assume that there exists an edge $f=\{f_1,f_2\}$ such that $f\in E(G_1)$. Since $A$ is independent, we know that at least one vertex of $f$, say $f_1$, is not in $A$. Without loss of generality, we assume that $e_i\cap f=\emptyset$ for all $i=2,\dots ,k$. Observe that for $v=b_k$ and $M=\{f, e_2,\dots ,e_{k-1}\}$, the required conditions are satisfied.
\end{proof}

In the next theorem, we give a sufficient condition for existence of a well-ordered facet cover of certain cardinality for the simplicial complex whose facet ideal is $I(G)^{[k]}$ where $G$ is assumed to have certain properties.

\begin{Theorem}\label{thm: disconnected complement facet cover}
	Let $G$ be a graph with no isolated vertices and $\nu(G)\geq 2$, which is not a complete bipartite graph. Suppose that $G^c$ is disconnected and for any $2\leq k\leq \nu(G)$, let $\Delta$ be the simplicial complex with facet ideal $I(G)^{[k]}$. Then $\Delta$ has a well-ordered facet cover of cardinality $n-2k+1$.
\end{Theorem}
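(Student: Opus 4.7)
The strategy is to exhibit an explicit sequence of $n-2k+1$ facets of $\Delta$ that all share a common $(2k-1)$-element ``core'' determined by the output of Lemma~\ref{lem:choose edges}, and then verify the three requirements of a well-ordered facet cover directly. I would first apply Lemma~\ref{lem:choose edges} to obtain induced subgraphs $G_1, G_2$, a $(k-1)$-matching $M$ containing some edge $e_0=\{a,b\}\in M\cap E(G_1)$, and a vertex $v\in V(G_2)$ with $v\notin V(M)$. Set $C=V(M)\cup\{v\}$, so that $|C|=2k-1$, and for each $z\in Z:=V(G)\setminus C$ put $F_z=C\cup\{z\}$. Then $|Z|=n-2k+1$, and the plan is to take these $F_z$'s (in any order) as the desired well-ordered facet cover.

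The first step is to show that each $F_z$ is actually a facet of $\Delta$, that is, the vertex set of some $k$-matching of $G$. If $z\in V(G_1)$, then $\{v,z\}\in E(G)$ by property~(2) of Lemma~\ref{lem:choose edges}, and since $v,z\notin V(M)$, the set $M\cup\{\{v,z\}\}$ is a $k$-matching whose vertex set is $F_z$. If instead $z\in V(G_2)\setminus\{v\}$, the edge $\{v,z\}$ need not exist, so here I would exploit property~(4): delete $e_0$ from $M$ and add the two crossing edges $\{a,z\}$ and $\{b,v\}$, both of which belong to $E(G)$ by property~(2). A short disjointness check, using that $a,b\in V(M)$ and $v,z\notin V(M)$, shows that $(M\setminus\{e_0\})\cup\{\{a,z\},\{b,v\}\}$ is a $k$-matching with vertex set $(V(M)\setminus\{a,b\})\cup\{a,b,v,z\}=F_z$.

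With all $F_z$'s established as facets, the facet-cover axioms are easy to verify. Every vertex of $G$ lies in some $F_z$, since $C\subseteq F_z$ for all $z$ and every $z\in Z$ lies in $F_z$; and the collection is minimal because each $z\in Z$ appears in exactly one member $F_z$ of the cover. Order the $F_z$'s arbitrarily as $F_1,\dots,F_m$ with $m=n-2k+1$. Take any facet $H$ of $\Delta$ with $H\notin\{F_1,\dots,F_m\}$. Since $|H|=2k>2k-1=|C|$, the intersection $H\cap Z$ is nonempty; and if it were a singleton $\{z\}$, then $H\subseteq C\cup\{z\}=F_z$, forcing $H=F_z$ by cardinality, a contradiction. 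Hence $|H\cap Z|\geq 2$, and one may pick $z_i\in H\cap Z$ with $i<m$. Then $F_i=C\cup\{z_i\}\subseteq H\cup F_{i+1}\cup\cdots\cup F_m$, because $C\subseteq F_m$ and $z_i\in H$, which is exactly the well-ordering condition.

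The only nontrivial step is producing the matching that realizes $F_z$ when $z\in V(G_2)\setminus\{v\}$; this is precisely where condition~(4) of Lemma~\ref{lem:choose edges} is used, and it explains the care taken in that lemma to guarantee an edge of $M$ inside $G_1$. Everything else reduces to a direct inspection of the axioms.
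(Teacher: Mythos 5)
Your proof is correct and follows essentially the same route as the paper's: both build the facets $F_z=V(M)\cup\{v,z\}$ from the data of Lemma~\ref{lem:choose edges}, realize them as $k$-matchings either by adjoining $\{v,z\}$ (for $z$ in $G_1$) or by swapping the $G_1$-edge of $M$ for the two crossing edges (for $z$ in $G_2$), and verify the well-ordering via the cardinality argument $|H|=2k>2k-1$. Your explicit justification that $|H\cap Z|\geq 2$ is a slightly more careful phrasing of the step the paper states tersely, but the argument is the same.
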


\begin{proof}
	Let $k=1,\ldots, \nu(G)$ and let $G_1, G_2,M$ and $v$ be as in Lemma~\ref{lem:choose edges}. Suppose that $M=\{e_1,\dots,e_{k-1}\}$ such that $e_1\in E(G_1)$. Let $e_1=\{y,z\}$. We set $U=e_1\cup \dots \cup e_{k-1}\cup \{v\}$. Let $V(G_1)\setminus U=\{x_1,\dots ,x_i\}$ and $V(G_2)\setminus U=\{x_{i+1},\dots ,x_{n-2k+1}\}$. We define \[F_j=e_1\cup \dots \cup e_{k-1}\cup \{v,x_j\}\] for each $j=1,\dots ,n-2k+1$.
	
	If $x_j\in V(G_1)\setminus U$, then $F_j$ is a facet of $\Delta$ corresponding to the $k$-matching $\{\{v,x_j\}\} \cup M$. On the other hand, if $x_j\in V(G_2)\setminus U$, then $F_j$ is a facet corresponding to the $k$-matching $\{\{v,y\}, \{z,x_j\}, e_2,\dots, e_{k-1} \}$. 
	
	We claim that $F_1,\dots ,F_{n-2k+1}$ is a well-ordered facet cover of $\Delta$. Since every vertex of $\Delta$ belongs to some $F_j$, these facets indeed form a cover. Also, since for every $j=1,\dots ,n-2k+1$ we have $x_j\in F_t$ if and only if $j=t$, it follows that this cover is minimal. To prove the ``well-ordered" property, let $H$ be a facet of $\Delta$ such that $H\notin \{F_1,\dots ,F_{n-2k+1}\}$. Observe that $H\not\subseteq U$, since $H$ has $2k$ vertices whereas $U$ has $2k-1$. On the other hand, since $H\notin \{F_1,\dots ,F_{n-2k+1}\}$,
	there exists at least two indices $\ell< m$ such that $\{x_\ell,x_m\}\subseteq H$. Then 
	the proof follows from the inclusion $F_\ell \subseteq H \cup F_{\ell+1}\cup \dots \cup F_{n-2k+1}$.
\end{proof}

Next, we show that having minimum depth for the edge ideal itself implies the same for all squarefree powers and vice versa. 
 
\begin{Corollary}\label{nice}
	Let $G$ be a graph with no isolated vertices and matching number $\nu(G)$. Then the following statements are equivalent:
	\begin{enumerate}
		\item[(i)] $G^c$ is disconnected.
		\item[(ii)] $g_{I(G)}(1)=0$.
		\item[(iii)] $g_{I(G)}(k)=0$ for all $1\leq k\leq \nu(G)$.
	\end{enumerate}
\end{Corollary}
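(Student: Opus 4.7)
The plan is to deduce the corollary as a synthesis of the preceding results, doing minimal extra work. The equivalence (i) $\iff$ (ii) follows directly from Corollary~\ref{prop:depth 1}: since $I(G)$ is generated in degree $2$, we have $d_1 = 2$, so $g_{I(G)}(1) = \depth(S/I(G)) - 1$, and this vanishes precisely when $\depth(S/I(G)) = 1$, which is equivalent to $G^c$ being disconnected. The implication (iii) $\Rightarrow$ (ii) is trivial by specializing to $k=1$. All the real content lies in (i) $\Rightarrow$ (iii).

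For (i) $\Rightarrow$ (iii), I would assume $G^c$ is disconnected and fix an integer $k$ with $1 \leq k \leq \nu(G)$, intending to show $g_{I(G)}(k) = 0$. The case $k = 1$ is covered by (i) $\iff$ (ii), so I may assume $k \geq 2$; in particular $\nu(G) \geq 2$. I would then split into two subcases. If $G$ is complete bipartite, Example~\ref{ramsau}(c) immediately yields $g_{I(G)}(k) = 0$ for every $1 \leq k \leq \nu(G)$. If $G$ is not complete bipartite, then the hypotheses of Theorem~\ref{thm: disconnected complement facet cover} are met (no isolated vertices, $\nu(G) \geq 2$, not complete bipartite, $G^c$ disconnected), so the simplicial complex $\Delta$ with $\mathcal{F}(\Delta) = I(G)^{[k]}$ admits a well-ordered facet cover of cardinality $n - 2k + 1$.

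Choosing $u = x_1 \cdots x_n$, so that $\Delta_u = \Delta$, Theorem~\ref{thm:erey faridi} gives $\beta_{n-2k+1,\, u}(S/I(G)^{[k]}) \neq 0$, i.e.\ $\beta_{n-2k+1,\, n}(S/I(G)^{[k]}) \neq 0$. The Hochster-formula identity recorded inside the proof of Proposition~\ref{reduced homology} translates this into $\tilde{H}_{2k-2}(\Gamma_k(G); K) \neq 0$, and Proposition~\ref{reduced homology} then delivers $g_{I(G)}(k) = 0$.

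The main obstacle in the whole chain, namely producing the well-ordered facet cover of cardinality $n - 2k + 1$ from the complement-disconnectedness hypothesis, has already been isolated and handled by Theorem~\ref{thm: disconnected complement facet cover} (with its technical Lemma~\ref{lem:choose edges}). Thus at the level of the corollary itself, the only care needed is to sort out the complete bipartite exception (where Theorem~\ref{thm: disconnected complement facet cover} does not apply) and to verify that $u = x_1 \cdots x_n$ is the correct squarefree monomial to feed into the Erey--Faridi bound; both are routine.
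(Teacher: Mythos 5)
Your proposal is correct and follows essentially the same route as the paper: (i)$\iff$(ii) via Corollary~\ref{prop:depth 1}, the complete bipartite case via Example~\ref{ramsau}(c), and otherwise Theorem~\ref{thm: disconnected complement facet cover} combined with Theorem~\ref{thm:erey faridi} to get $\beta_{n-2k+1,n}(S/I(G)^{[k]})\neq 0$. The only (cosmetic) difference is that you convert the nonvanishing Betti number into $g_{I(G)}(k)=0$ by passing through Proposition~\ref{reduced homology}, whereas the paper concludes directly from $\projdim(S/I(G)^{[k]})\geq n-2k+1$ and the Auslander--Buchsbaum formula.
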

\begin{proof}
	Equivalence of $(i)$ and $(ii)$ was already proved in Corollary~\ref{prop:depth 1}. It is enough to show that $(i)$ implies $(iii).$ Let $2\leq k\leq \nu(G)$. If $G$ is a complete bipartite graph, then the result follows from Example~\ref{ramsau}~(c). Otherwise, let $\Delta$ be the simplicial complex whose facet ideal is $I(G)^{[k]}$. Then by Theorem~\ref{thm: disconnected complement facet cover}, $\Delta$ has a well-ordered facet cover of cardinality $n-2k+1$. Therefore, Theorem~\ref{thm:erey faridi} implies $\beta_{n-2k+1,n}(S/I(G)^{[k]})\neq 0$. Thus, $\projdim (S/I(G)^{[k]})\geq n-2k+1$, and hence the desired result follows from the Auslander-Buchsbaum formula.
\end{proof}

Let $G$ be a graph. 
A subset $D$ of $V(G)$ is called a \emph{dominating set} if every vertex $v\in V(G)-D$ is adjacent to a vertex in $D$. A complete subgraph $K_m$ of $G$ is called a \emph{dominating clique} if $V(K_m)$ is a dominating set.

The final result of this section applies the notion of dominating cliques to  provide a sufficient condition for having minimum depth for a given squarefree power~$k$.  

\begin{Theorem}\label{thm: dominating clique}
	Let $G$ be a graph and let $2\leq k\leq \nu(G)$. If $G$ has a dominating clique $K_{2k-1}$, then $g_{I(G)}(k)=0$.
\end{Theorem}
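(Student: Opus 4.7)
The plan is to reduce the problem to a Betti-number statement and then exhibit an explicit well-ordered facet cover using the dominating clique, so that Theorem~\ref{thm:erey faridi} applies. By Proposition~\ref{prop:lower bound} we already have $\depth(S/I(G)^{[k]}) \geq 2k-1$, and by the Auslander-Buchsbaum formula it suffices to prove $\projdim(S/I(G)^{[k]}) \geq n-2k+1$. Since $I(G)^{[k]}$ is generated in degree $2k$, Hochster's formula (as used in Proposition~\ref{reduced homology}) shows that this is equivalent to $\beta_{n-2k+1,\,n}(S/I(G)^{[k]}) \neq 0$. Taking $\Delta$ to be the simplicial complex with $\mathcal{F}(\Delta)=I(G)^{[k]}$ and $u=x_1\cdots x_n$, Theorem~\ref{thm:erey faridi} reduces the proof to constructing a well-ordered facet cover of $\Delta$ of cardinality $n-2k+1$.

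The construction is as follows. Let $U=\{v_1,\ldots,v_{2k-1}\}$ be the vertex set of the given dominating clique and let $w_1,\ldots,w_{n-2k+1}$ be an enumeration of $V(G)\setminus U$. Note $n\geq 2k$ since $\nu(G)\geq k$ and $|U|=2k-1$. For each $j$, use the dominating property to pick some $v_j^{*}\in U$ adjacent to $w_j$, then pair off the $2k-2$ vertices of $U\setminus\{v_j^{*}\}$ using edges of the clique to form a $(k-1)$-matching inside $U$; adjoining $\{w_j,v_j^{*}\}$ produces a $k$-matching $M_j$ of $G$ with $V(M_j)=U\cup\{w_j\}$. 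Set $F_j=V(M_j)$; then $F_1,\ldots,F_{n-2k+1}$ are facets of $\Delta$, each containing $U$, and together they cover every vertex of $\Delta$. Minimality is clear because $w_j\in F_i$ if and only if $i=j$.

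It remains to verify the well-ordered property. Let $H$ be any facet of $\Delta$ with $H\notin\{F_1,\ldots,F_{n-2k+1}\}$; then $|H|=2k$ and $H\neq U\cup\{w_j\}$ for any $j$. I first claim $U\not\subseteq H$: otherwise $H$ would contain the $2k-1$ vertices of $U$ plus exactly one extra vertex $w_j$, giving $H=F_j$. Hence some vertex outside $U$ lies in $H$, so there exists at least one index $i$ with $w_i\in H$. Moreover this $i$ can be chosen with $i\leq n-2k$, because if $w_{n-2k+1}$ were the only $w_j$ in $H$ then $H\subseteq U\cup\{w_{n-2k+1}\}$, forcing equality and hence $H=F_{n-2k+1}$, a contradiction. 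Taking the smallest such $i$, we have $w_i\in H$, while $U\subseteq F_{i+1}$; therefore
\[
F_i \;=\; U\cup\{w_i\} \;\subseteq\; H\cup F_{i+1}\cup\cdots\cup F_{n-2k+1},
\]
which is exactly the well-ordered condition. Applying Theorem~\ref{thm:erey faridi} then yields $\beta_{n-2k+1,\,n}(S/I(G)^{[k]})\neq 0$, and the theorem follows.

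The only delicate step is the verification of the well-ordered condition for arbitrary facets $H$: one must rule out the possibility that $H$ is contained in $U\cup\{w_{n-2k+1}\}$ without itself being $F_{n-2k+1}$, which is precisely why the simple cardinality count $|H|=2k$ combined with $H\not\supseteq U$ is the crucial observation. Everything else (existence of the matchings, minimality of the cover, and the algebraic reduction) is essentially automatic from the clique structure of $U$ and the results already at our disposal.
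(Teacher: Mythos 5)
Your proof is correct and follows essentially the same route as the paper: the facets $F_j=V(K_{2k-1})\cup\{w_j\}$, realized as $k$-matchings via the dominating property plus a pairing of the clique, form exactly the well-ordered facet cover the paper constructs, and the conclusion is drawn from Theorem~\ref{thm:erey faridi} and Auslander--Buchsbaum in the same way. The only difference is that you write out the well-ordered verification explicitly, whereas the paper defers it to the analogous argument in Theorem~\ref{thm: disconnected complement facet cover}.
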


\begin{proof}
	Let $K_{2k-1}$ be a dominating clique with $V(K_{2k-1})=\{x_1,\dots ,x_{2k-1}\}$. Let $V(G) - V(K_{2k-1}) =\{x_{2k},x_{2k+1},\dots ,x_n\}$. For every $j=2k,\dots ,n$, we set $F_j=\{x_1,\dots,x_{2k-1}\}\cup \{x_j\}$. Let $\Delta$ be the simplicial complex with facet ideal $\mathcal{F}(\Delta)=I(G)^{[k]}$. Then, each $F_j$ is a facet of $\Delta$. Indeed, without loss of generality, if $x_j$ is adjacent to $x_{2k-1}$, then $\{\{x_1,x_2\},\dots ,\{x_{2k-3},x_{2k-2}\}, \{x_j,x_{2k-1}\}\}$ is a $k$-matching of $G$. It is clear that $\{F_{2k},\dots ,F_{n}\}$ is a minimal facet cover of $\Delta$. As in the proof of Theorem~\ref{thm: disconnected complement facet cover} one can show that $F_{2k},\dots ,F_n$ is a well-ordered facet cover. The proof then follows from Theorem~\ref{thm:erey faridi} and the Auslander-Buchsbaum formula.
\end{proof}

\section{depth of squarefree powers with Linear quotients}\label{Sec:Linear quotients and minimal depth}

In this section, we provide a criterion for squarefree powers of edge ideals with linear quotients to have minimum depth. Applying that criterion, we discuss the depth of squarefree powers of the edge ideal of a class of chordal graphs which are obtained by adding some whiskers to a complete graph. Indeed, we determine when the depth of such ideals is minimum.  

In the next lemma, we show that having linear quotients is inherited by the squarefree part. 

\begin{Lemma}\label{lem:linear quotients squarefree}
	Let $I$ be a monomial ideal with linear quotients. Then the squarefree part of $I$ has also linear quotients.	
\end{Lemma}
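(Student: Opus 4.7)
The plan is to show that if $u_1,\ldots,u_s$ is a linear quotients ordering of $G(I)$ and $v_1,\ldots,v_r$ is the subsequence of squarefree elements, with $v_j=u_{i_j}$, then this induced ordering witnesses linear quotients for the squarefree part $I'$. Because the elements of $G(I)$ form an antichain under divisibility, so do its squarefree elements, giving $G(I')=\{v_1,\ldots,v_r\}$. The task is therefore to prove, for every $j$, that $L_j=(v_1,\ldots,v_{j-1}):v_j$ is generated by variables, given that the ideal $J_j=(u_1,\ldots,u_{i_j-1}):u_{i_j}$ is generated by variables by hypothesis.

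The core step uses that $L_j$ is generated by the squarefree monomials $v_l/\gcd(v_l,v_j)$ for $l<j$, and each of these equals $u_{i_l}/\gcd(u_{i_l},u_{i_j})$, which lies in $J_j$ since $i_l<i_j$. Because $J_j$ is generated by variables, for each such $l$ there exists a variable $x$ with $x\mid v_l/\gcd(v_l,v_j)$ and $x\in J_j$. Such an $x$ lies in $\supp(v_l)\setminus\supp(v_j)$, so $xv_j$ is squarefree. From $x\in J_j$ we obtain $u_k\mid xv_j$ for some $k<i_j$, and the squarefreeness of $xv_j$ forces $u_k$ itself to be squarefree, hence $u_k=v_{l'}$ for some $l'<j$. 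Therefore $x\in L_j$, so every generator $v_l/\gcd(v_l,v_j)$ of $L_j$ is divisible by a variable belonging to $L_j$. This forces $L_j$ to be generated by variables.

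The main subtlety I expect is handling variables $y\in J_j$ that happen to lie in $\supp(v_j)$: the witness $u_k\mid yv_j$ can then only be a non-squarefree monomial of the form $y^2w$ (since any squarefree witness $u_k=v_{l'}\mid yv_j$ would force $v_{l'}\mid v_j$, contradicting minimality of $v_{l'}$ and $v_j$ in $G(I)$), so such $y$ need not lie in $L_j$. The argument circumvents this obstacle by selecting the variable $x$ to divide the squarefree monomial $v_l/\gcd(v_l,v_j)$, which automatically places $x$ outside $\supp(v_j)$ and thereby guarantees that the corresponding witness $u_k$ is squarefree and hence appears among the $v_{l'}$'s.
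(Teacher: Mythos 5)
Your proof is correct and follows essentially the same route as the paper: both use the ordering of the squarefree generators induced from the linear quotients order of $G(I)$, pass to the larger colon ideal $(u_1,\ldots,u_{i_j-1}):u_{i_j}$, and use squarefreeness of $xv_j$ to force the witnessing generator $u_k$ to be squarefree and hence to lie among the earlier $v_{l'}$. The only difference is how the condition $x\notin\supp(v_j)$ is secured: the paper invokes Lemma~\ref{squarefreecolon}, while you obtain it directly by choosing $x$ to divide $v_l/\gcd(v_l,v_j)$ --- a self-contained handling of exactly the subtlety you flag in your last paragraph.
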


\begin{proof}
	Let $G(I)=\{u_1,\dots ,u_s\}$, and  assume that $u_1,\dots ,u_s$ is a  linear quotients ordering. Let $I'$ be the squarefree part of $I$ and  $G(I')=\{u_{i_1},\dots ,u_{i_t}\}$ with $1\leq i_1< \dots <i_t\leq s$. Let $A_j=(u_{i_1},\dots,u_{i_{j-1}}):u_{i_j}$. We show that $A_j$ is generated by variables. Let $v\in G(A_j)$. Then $v\in (u_1,\dots, u_{i_j-1}):u_{i_j}$. Hence there exists $t$ such that $x_t|v$ and  $u_k|x_tu_{i_j}$ for some $k\leq i_j-1$. Lemma~\ref{squarefreecolon} implies that $x_iu_{i_j}$  is squarefree. This implies that $u_k$ is squarefree, and hence $k\in \{i_1,\ldots,i_{j-1}\}$. Therefore, $x_t\in A_j$ which implies that $v=x_t$, since $v\in G(A_j)$ and $x_t|v$. Thus $A_j$ is generated by variables,  as desired.  
\end{proof}



Recall that a \emph{cochordal} graph is a graph whose complementary graph is chordal, i.e. has no induced cycle of length greater than~3. The following is a consequence of Lemma~\ref{lem:linear quotients squarefree}, \cite[Theorem~10.1.9]{HHBook} and \cite[Theorem~10.2.5]{HHBook}.

\begin{Corollary}\label{cochordal linear quotients}
	Let $G$ be a cochordal graph. Then $I(G)^{[k]}$ has linear quotients for any $k=1,\ldots,\nu(G)$. 
\end{Corollary}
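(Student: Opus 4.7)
The plan is to chain together the two cited results from \cite{HHBook} with Lemma~\ref{lem:linear quotients squarefree}. Recall from the discussion preceding Proposition~\ref{powers equal} that, by definition, the squarefree part of the ordinary power $I(G)^k$ is precisely $I(G)^{[k]}$. So it suffices to show that $I(G)^k$ has linear quotients for every $k=1,\ldots,\nu(G)$, and then invoke Lemma~\ref{lem:linear quotients squarefree}.

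First I would use Fröberg's theorem (\cite[Theorem~10.1.9]{HHBook}): since $G^c$ is chordal, $I(G)$ has a linear resolution, and in particular $I(G)$ has linear quotients with respect to a suitable ordering of its generators. Next, \cite[Theorem~10.2.5]{HHBook} states (in the form relevant here) that all ordinary powers $I(G)^k$ of the edge ideal of a cochordal graph have linear quotients. This gives, for each $k$ in the required range, a linear quotients ordering of the minimal generators of $I(G)^k$.

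Finally, I would apply Lemma~\ref{lem:linear quotients squarefree} to the monomial ideal $I(G)^k$: the squarefree part of a monomial ideal with linear quotients again has linear quotients (with respect to the induced ordering on its squarefree generators). Since the squarefree part of $I(G)^k$ is $I(G)^{[k]}$, this yields the claim.

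No substantial obstacle is anticipated, as every link of the argument is explicitly available; the only point to check is the identification of the squarefree part of $I(G)^k$ with $I(G)^{[k]}$, which is immediate from the definition of the squarefree power.
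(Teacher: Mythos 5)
Your proposal is correct and follows exactly the route the paper intends: Fr\"oberg's theorem to get a linear resolution for $I(G)$, the Herzog--Hibi--Zheng result (\cite[Theorem~10.2.5]{HHBook}) to get linear quotients for all ordinary powers $I(G)^k$, and then Lemma~\ref{lem:linear quotients squarefree} applied to the squarefree part of $I(G)^k$, which is $I(G)^{[k]}$. The paper's proof is precisely this one-line chain of citations, so there is nothing to add.
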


In Corollary~\ref{prop:depth 1}, we provided an explicit combinatorial characterization of edge ideals with the minimum depth. In the next proposition, we provide a combinatorial criterion for all squarefree powers of edge ideals with linear quotients admitting minimum depth. 

We call a $k$-matching $M$ a \emph{dominating $k$-matching} if $V(M)$ is a dominating set, i.e. any vertex $v\in V(G)-V(M)$ is adjacent to a vertex in $V(M)$.      

\begin{Proposition}\label{cochordal mindepth}
	Let $G$ be a graph with no isolated vertices and $1\leq k\leq \nu(G)$. If $I(G)^{[k]}$ has linear quotients with respect to the ordering $u_1,\ldots,u_s$ of its minimal generators, then the following statements are equivalent:
	\begin{enumerate}
		\item[(i)] $g_{I(G)}(k)=0$.
		\item[(ii)] There exist a dominating $k$-matching $M$ and some $i=2,\ldots,s$ which satisfy the following conditions:
		\begin{enumerate}
			\item[(a)] $V(M)=\mathrm{supp}(u_i)$, and 
			\item[(b)] for any $t\in V(G)-V(M)$, there exists a $k$-matching $M'$ with $V(M')=\mathrm{supp}(u_j)$ for some $j=1,\ldots,i-1$ such that $V(M')\subseteq V(M)\cup \{t\}$.
		\end{enumerate}  
	\end{enumerate}
	In particular, if $G$ is a cochordal graph, then statements (i) and (ii) are equivalent. 
\end{Proposition}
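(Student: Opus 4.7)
The plan is to translate both conditions through the depth formula~(\ref{lindepth}) for ideals with linear quotients, and then reinterpret the resulting algebraic conditions combinatorially in terms of matchings. Write $r_i$ for the minimum number of variables generating the colon ideal $(u_1,\ldots,u_{i-1}):u_i$. Since $I(G)^{[k]}$ is equigenerated in degree $2k$, each $u_i$ is a squarefree monomial with $|\mathrm{supp}(u_i)|=2k$, and by Lemma~\ref{squarefreecolon} the colon $(u_1,\ldots,u_{i-1}):u_i$ is generated by variables $x_t$ with $t\notin \mathrm{supp}(u_i)$. In particular $r_i \leq n-2k$ for every $i=2,\ldots,s$. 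Combining (\ref{lindepth}) with the Auslander--Buchsbaum formula gives $\depth(S/I(G)^{[k]})=n-\max_i r_i - 1\geq 2k-1$, and hence $g_{I(G)}(k)=0$ if and only if there exists some $i\in\{2,\ldots,s\}$ with $r_i=n-2k$; equivalently, every variable $x_t$ with $t\notin \mathrm{supp}(u_i)$ lies in the colon ideal $(u_1,\ldots,u_{i-1}):u_i$.

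The second step is to recast this algebraic condition combinatorially. For the $i$ above, let $M$ be the $k$-matching with $V(M)=\mathrm{supp}(u_i)$. For $t\notin V(M)$, the condition $x_t\in (u_1,\ldots,u_{i-1}):u_i$ means that there exists $j<i$ with $u_j\mid x_t u_i$. By Lemma~\ref{squarefreecolon}, $x_t u_i$ is a squarefree monomial with exactly $2k+1$ variables in its support, so $\mathrm{supp}(u_j)$ is a $2k$-subset of $\mathrm{supp}(u_i)\cup\{t\}$. Because $j\neq i$ forces $\mathrm{supp}(u_j)\neq\mathrm{supp}(u_i)$, we must have $t\in\mathrm{supp}(u_j)$. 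Taking $M'$ to be the $k$-matching with $V(M')=\mathrm{supp}(u_j)$ yields exactly condition (ii)(b); running the same chain of implications in reverse gives the other direction.

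For the dominating property of $M$ stated in (ii), note that condition (b) is already sufficient: for every $t\notin V(M)$, the matching $M'$ contains an edge whose one endpoint is $t$ and whose other endpoint lies in $V(M')\setminus\{t\}\subseteq V(M)$, so $t$ is adjacent to some vertex in $V(M)$, i.e.\ $V(M)$ is a dominating set. Thus (ii)(a) together with (ii)(b) is equivalent to the pure existence statement in (ii). Finally, the last sentence of the proposition follows directly from Corollary~\ref{cochordal linear quotients}, which guarantees that when $G$ is cochordal then $I(G)^{[k]}$ has linear quotients for every $1\leq k\leq \nu(G)$, so the proved equivalence applies verbatim.

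I do not expect a major obstacle here: all the work is in the careful bookkeeping of the forward/backward translation between the colon ideal membership $x_t\in(u_1,\ldots,u_{i-1}):u_i$ and the matching-theoretic statement about $M'\subseteq V(M)\cup\{t\}$. The only subtlety is making sure that $j<i$ is preserved in both directions and that $u_j\neq u_i$ forces $t\in V(M')$, both of which are handled by Lemma~\ref{squarefreecolon} and the degree count.
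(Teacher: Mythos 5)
Your proposal is correct and follows essentially the same route as the paper: both translate $g_{I(G)}(k)=0$ via formula~(\ref{lindepth}) and Auslander--Buchsbaum into the existence of an index $i$ with $r_i=n-2k$, rewrite that as colon-ideal membership $x_t\in(u_1,\ldots,u_{i-1}):u_i$ for all $t\notin\supp(u_i)$ using Lemma~\ref{squarefreecolon}, and then read this off as condition (ii) on matchings, with the dominating property of $M$ observed to be a consequence of (b). Your extra remark that $j\neq i$ forces $t\in\supp(u_j)$ is a small but welcome clarification of why the dominating property follows.
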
	

\begin{proof}
	Suppose that $V(G)=[n]$. Following notation of Section~\ref{Sec:general squarefree powres}, for any $i=2,\ldots,r$ let $r_i$ be the number of variables in $(u_1,\ldots,u_{i-1}):u_i$. According to~(\ref{lindepth}), $\depth(S/I(G)^{[k]})=2k-1$ if and only if $r_i=n-2k$ for some $i=2,\ldots,r$. This is the case if and only if $(u_1,\ldots,u_{i-1}):u_i$ is generated by $n-2k$ variables, namely 
	\[
	(u_1,\ldots,u_{i-1}):u_i=(x_t: t\in V(G)-\mathrm{supp}(u_i)),
	\] 
	since $|\mathrm{supp}(u_i)|=2k$.
	In other words, for any $t\in V(G)-\mathrm{supp}(u_i)$, there exists $j=1,\ldots,i-1$ such that $u_j|x_t u_i$, or equivalently $\mathrm{supp}(u_j)\subseteq \{t\}\cup \mathrm{supp}(u_i)$. 
	Let $M$ be a $k$-matching with $V(M)=\mathrm{supp}(u_i)$ and $M'$ be a $k$-matching with $V(M')=\mathrm{supp}(u_j)$. 
	The inclusion $V(M')\subseteq V(M)\cup \{t\}$ for any $t\in V(G)-V(M)$ implies that $t$ is adjacent to some vertices of $M$, and hence $M$ is a dominating $k$-matching of $G$. Thus, the statements (i) and (ii) are equivalent, as desired.  
	
	In particular, if $G$ is cochordal, then the result follows from Corollary~\ref{cochordal linear quotients}.      
\end{proof}
As it was mentioned at the end of the proof of Proposition~\ref{cochordal mindepth}, the condition that $M$ is a ``dominating" matching follows from other conditions. Therefore, we can drop the word ``dominating" from the statement of the proposition. However, to emphasize on this combinatorial condition, we keep it in the statement, especially that this provides us a nice necessary condition for having minimum depth. Indeed, as an immediate consequence of Proposition~\ref{cochordal mindepth}, we get the following necessary condition to have minimum depth.

\begin{Corollary}\label{necessary condition}
	Let $G$ be a graph with no isolated vertices and $k$ be an integer with $1\leq k\leq \nu(G)$ where $I(G)^{[k]}$ has linear quotients. If $g_{I(G)}(k)=0$, then $G$ has a dominating $k$-matching. In particular, the statement holds for any cochordal graph $G$ with no isolated vertices.  	
\end{Corollary}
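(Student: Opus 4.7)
The plan is to derive this corollary as an immediate consequence of Proposition~\ref{cochordal mindepth} together with Corollary~\ref{cochordal linear quotients}. First I would fix an ordering $u_1,\ldots,u_s$ of $G(I(G)^{[k]})$ with respect to which linear quotients hold, and assume $g_{I(G)}(k)=0$. Then condition (i) of Proposition~\ref{cochordal mindepth} is satisfied, so condition (ii) must hold as well. In particular, there exists a dominating $k$-matching $M$ with $V(M)=\mathrm{supp}(u_i)$ for some index $i$. Discarding the extra combinatorial data (the index $i$ and the auxiliary matchings $M'$ indexed by vertices $t\notin V(M)$), I retain exactly the existence of a dominating $k$-matching of $G$, which is the required conclusion.

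For the \emph{in particular} clause, the hypothesis that $I(G)^{[k]}$ has linear quotients is not assumed but must be verified from cochordality. I would invoke Corollary~\ref{cochordal linear quotients}, which asserts precisely that $I(G)^{[k]}$ has linear quotients for every $1\leq k\leq \nu(G)$ whenever $G$ is cochordal. With this in hand, the first half of the corollary applies verbatim and delivers the dominating $k$-matching.

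The only real obstacle is rhetorical rather than mathematical: one must make clear that the combinatorial condition (ii)(b) of Proposition~\ref{cochordal mindepth} already forces the matching $M$ produced there to be dominating, so that the weaker statement of the present corollary is indeed a genuine consequence. All substantive content is absorbed into Proposition~\ref{cochordal mindepth}, Corollary~\ref{cochordal linear quotients}, and Lemma~\ref{lem:linear quotients squarefree}; no further argument is needed.
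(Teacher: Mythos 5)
Your proposal is correct and matches the paper exactly: the paper also presents this corollary as an immediate consequence of Proposition~\ref{cochordal mindepth} (noting, as you do, that condition (ii)(b) there already forces the matching to be dominating), with the cochordal case handled by Corollary~\ref{cochordal linear quotients}. No gaps.
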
 

As another consequence of Proposition~\ref{cochordal mindepth}, we show that the highest non-zero squarefree power of the edge ideal of any graph has the minimum depth.

\begin{Corollary}\label{nu}
	Let $G$ be a graph with no isolated vertices. Then $g_{I(G)}(\nu(G))=0$. 
\end{Corollary}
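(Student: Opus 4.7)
The plan is to apply Proposition~\ref{cochordal mindepth} at $k=\nu(G)$. Two ingredients are required: that $I(G)^{[\nu(G)]}$ admits linear quotients, and that condition (ii) of that proposition is satisfied for some maximum matching $M$ of $G$.

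First I would establish a swap lemma: for any maximum matching $M$ of $G$ and any $t\in V(G)\setminus V(M)$, there exists a maximum matching $M'\neq M$ with $V(M')\subseteq V(M)\cup\{t\}$. Since $G$ has no isolated vertices, $t$ has a neighbor $u$, and by maximality of $M$ we must have $u\in V(M)$ (otherwise $M\cup\{\{u,t\}\}$ would be a larger matching). Writing $u'$ for the $M$-partner of $u$, one takes $M'=(M\setminus\{\{u,u'\}\})\cup\{\{u,t\}\}$; then $V(M')=(V(M)\setminus\{u'\})\cup\{t\}$. As a byproduct, every vertex of $G$ lies in some maximum matching, so $\supp(I(G)^{[\nu(G)]})=[n]$, and every maximum matching is a dominating matching.

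Next I would invoke Edmonds' classical matching matroid theorem to conclude that the vertex sets of the maximum matchings of $G$ form the basis system of a matroid on $V(G)$. Translating the basis exchange property into the squarefree exchange condition shows that $I(G)^{[\nu(G)]}$ is a matroidal ideal; in particular it admits linear quotients by \cite[Lemma~1.3]{HT} (as used in the proof of Proposition~\ref{depthmatroid}). Fix any such linear quotients ordering $u_1,\dots,u_s$ of $G(I(G)^{[\nu(G)]})$ and let $M$ be the maximum matching whose associated generator is $u_s$.

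With this setup, verify condition (ii) of Proposition~\ref{cochordal mindepth} at $i=s$: part (a) is immediate since $V(M)=\supp(u_s)$; for (b), given any $t\in V(G)\setminus V(M)$, the swap lemma produces a maximum matching $M'\neq M$ with $V(M')\subseteq V(M)\cup\{t\}$, whose associated generator is necessarily some $u_j$ with $j<s$, because the bijection between maximum matchings and generators forces every matching distinct from $M$ to correspond to an earlier $u_j$. Proposition~\ref{cochordal mindepth} then yields $g_{I(G)}(\nu(G))=0$.

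The main obstacle is the appeal to the matching matroid theorem, which is classical but not established elsewhere in the paper; a self-contained treatment would verify the basis exchange property for maximum matchings by analyzing the symmetric difference of two matchings, which decomposes into alternating paths and even cycles and admits an augmenting-type swap. Alternatively, once matroidality and full support are in hand, one can bypass Proposition~\ref{cochordal mindepth} and apply Theorem~\ref{matroid} directly to deduce $\depth(S/I(G)^{[\nu(G)]})=2\nu(G)-1$.
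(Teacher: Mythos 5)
Your proof is correct, and the core combinatorial step --- the swap $M'=(M\setminus\{\{u,u'\}\})\cup\{\{u,t\}\}$ after observing that maximality of $M$ forces the neighbor $u$ of $t$ into $V(M)$ --- is exactly the argument the paper uses to verify condition (ii)(b) of Proposition~\ref{cochordal mindepth}. Where you genuinely diverge is in how linear quotients of $I(G)^{[\nu(G)]}$ is obtained: the paper simply cites \cite[Theorem~5.1]{BHZ}, which says the lexicographic order on the generators of the highest nonvanishing squarefree power is a linear quotients order, whereas you derive linear quotients from Edmonds' matching matroid (the vertex sets of maximum matchings are the bases of a matroid, so $I(G)^{[\nu(G)]}$ is matroidal and \cite[Lemma~1.3]{HT} applies). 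Your route imports a classical external fact not used elsewhere in the paper, but it buys more: once you know the top squarefree power is matroidal with full support (which your swap lemma gives, since every vertex lies in some maximum matching), Theorem~\ref{matroid} applies verbatim and yields $\depth(S/I(G)^{[\nu(G)]})=2\nu(G)-1$ directly, making Proposition~\ref{cochordal mindepth} unnecessary for this corollary. One cosmetic point: generators of $I(G)^{[\nu(G)]}$ correspond to vertex sets of maximum matchings rather than to the matchings themselves, so you should say ``a maximum matching $M$ with $V(M)=\supp(u_s)$''; this does not affect the argument, since the swap produces $V(M')\neq V(M)$ and hence a generator $u_j$ with $j<s$.
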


\begin{proof}
	Let $k=\nu(G)$. It was proved in \cite[Theorem~5.1]{BHZ} that $I(G)^{[k]}$ has linear quotients with respect to lexicographic order on the generators where the vertices can have any fixed labelling. Let $u_1,\dots,u_s$ be a linear quotients order on the minimal monomial generators of $I(G)^{[k]}$. 
	Let $M$ be a $k$-matching with $V(M)=\mathrm{supp}(u_s)$. It suffices to show that $M$ is a $k$-matching which satisfies condition (ii) of Proposition~\ref{cochordal mindepth}. Let $v\in V(G)-V(M)$.  Since $G$ has no isolated vertices, $v$ is adjacent to at least one vertex of $G$, say $w$. Observe that if $w\notin V(M)$, then $M$ together with the edge $\{v,w\}$ is a matching of $G$ of size greater than $k$, which is a contradiction. Therefore, we may assume that $w\in e$ for some $e\in M$. Then it suffices to put $M'=(M-\{e\})\cup \{e'\}$ with $e'=\{v,w\}$, which completes the proof.
\end{proof}
The corollary above does not generalize to squarefree monomial ideals. Indeed, there are examples of squarefree monomial ideals whose highest non-zero squarefree powers do not have minimum depth. For example, the monomial ideal
\[I=(x_1x_3x_5, x_2x_4x_6, x_5x_7x_9, x_4x_6x_8, x_4x_7x_{10}, x_9x_{10}x_{11}, x_5x_8x_{11})\]
in $S=K[x_1,\ldots,x_{11}]$ satisfies $\nu(I)=3$ with $g_I(3)=1\neq 0$.

\medskip
As an application of Proposition~\ref{cochordal mindepth}, we discuss the depth of squarefree powers of edge ideals of \emph{multiple whiskered complete graphs} which are obtained by attaching some whiskers to each vertex of a complete graph $K_s$ with $s\geq 2$. We denote such a graph by $H(a_1,\ldots,a_s)$ where $a_i\geq 0$ is the number of whiskers attached to the vertex $i$ of $K_s$. Here the vertex set of $K_s$ is assumed to be $[s]=\{1,\ldots,s\}$. 
The case $G=H(a_1,\ldots,a_s)$ with $a_1=\cdots=a_s=s-1$ came up in \cite{Ha-Hi} where edge ideals of minimum projective dimension were considered. In the same article, besides other results,  it was shown that $\projdim(S/I(G))=s-2$ which means that $\depth (S/I(G))=s^2-2s+2$ is not minimum.   

\begin{Theorem}\label{multiwhiskered depth}
	Let $G=H(a_1,\ldots,a_s)$ with $a_i\geq 1$ for all $i=1,\ldots,s$ and let  $k=1,\ldots,\nu(G)$. Then we have:
	\begin{enumerate}
		\item[(a)] $\nu(G)=s$;
		\item[(b)] $G$ is cochordal;
		\item[(c)] The following statements are equivalent: 
		\begin{enumerate}
			\item[(i)] $g_{I(G)}(k)=0$; 
			\item[(ii)] $\lfloor s/2 \rfloor+1 \leq k \leq s$. 
		\end{enumerate}
	\end{enumerate}
\end{Theorem}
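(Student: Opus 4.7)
For part (a), I would observe that every edge of $G$ has at least one endpoint in $V(K_s)$, so any matching has at most $s$ edges, and the collection $\{\{i,w_{i,1}\}:1\le i\le s\}$ (which is a matching thanks to the hypothesis $a_i\ge 1$) achieves this bound. For part (b), I would describe $G^c$ explicitly: the set $V(K_s)$ is an independent set of $G^c$, the whiskers together form a clique of $G^c$, and each whisker $w_{i,\ell}$ is adjacent in $G^c$ to every vertex of $V(K_s)$ other than $i$. If $G^c$ contained an induced cycle $C$ of length at least $4$, then since $V(K_s)$ is independent in $G^c$ no two consecutive vertices of $C$ lie in $V(K_s)$, and since the whiskers form a clique in $G^c$ any two whiskers occurring in $C$ must be consecutive (else they would give a chord). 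A short case analysis based on these two constraints quickly forces $|V(C)|\le 3$, a contradiction.

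For part (c), cochordality from (b) together with Corollary~\ref{cochordal linear quotients} makes Proposition~\ref{cochordal mindepth} available. Since a whisker $w_{i,\ell}$ has $i$ as its unique neighbor in $G$, a $k$-matching $M$ is dominating if and only if $V(K_s)\subseteq V(M)$; equivalently, $M$ must consist of $s-k$ edges inside $K_s$ together with $2k-s$ whisker edges, so a dominating $k$-matching exists precisely when $2k\ge s$. For the direction (ii)$\Rightarrow$(i) with $\lfloor s/2\rfloor+1\le k<s$, I would write down such a dominating $M$ explicitly and, for each $t\notin V(M)$, produce an explicit $k$-matching $M'$ with $V(M')\subseteq V(M)\cup\{t\}$ and $V(M')\neq V(M)$ by a small swap inside $M$ that uses $t$; the case $k=s$ is Corollary~\ref{nu}. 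For the contrapositive of (i)$\Rightarrow$(ii) with $2k<s$ (that is, $s$ odd with $k\le(s-1)/2$, or $s$ even with $k<s/2$), no dominating $k$-matching exists at all, so Corollary~\ref{necessary condition} immediately yields $g_{I(G)}(k)\neq 0$.

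The remaining and hardest subcase is $s$ even with $k=s/2$. Here $|V(M)|=2k=s$ combined with $V(K_s)\subseteq V(M)$ forces $V(M)=V(K_s)$, so the unique generator of $I(G)^{[s/2]}$ that can play the role of $u_i$ in condition (ii)(a) of Proposition~\ref{cochordal mindepth} is $u_M=x_1x_2\cdots x_s$; I therefore need to show that condition (ii)(b) necessarily fails at $u_M$ in any linear quotients ordering, or equivalently, via Proposition~\ref{reduced homology}, that $\tilde H_{s-2}(\Gamma_{s/2}(G);K)=0$. My plan here is to analyze $\Gamma_{s/2}(G)$ via its description as the complex of subsets $F\subseteq V(G)$ whose induced subgraph has matching number strictly less than $s/2$, and either compute the vanishing reduced homology by a shelling or deformation argument exploiting the whisker structure, or equivalently show that the subideal $I(G)^{[s/2]}\setminus\{u_M\}$ fails to have linear quotients---for otherwise placing $u_M$ at the end of the extended ordering would give $r=n-s$ via the colon $(x_w:w\text{ a whisker of }G)$ and force $g_{I(G)}(s/2)=0$, contradicting the theorem. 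Ruling out this scenario, and thereby pinning down the obstruction to (ii)(b) at $u_M$, is the principal technical difficulty.
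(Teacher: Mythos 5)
Your treatment of (a), (b), and the easy directions of (c) is sound, and your reduction of ``a dominating $k$-matching exists'' to $2k\ge s$ agrees with what the paper uses; in particular the contrapositive of (i)$\Rightarrow$(ii) for $2k<s$ via Corollary~\ref{necessary condition} is exactly the paper's argument. But part (c) has two gaps. The decisive one is the case $s$ even, $k=s/2$, which you correctly isolate as the crux but do not actually prove: both of your suggested strategies are only plans, and the second is circular (you propose to rule out a scenario ``because it would force $g_{I(G)}(s/2)=0$, contradicting the theorem'' --- the theorem being proved). The paper settles this case by a direct combinatorial argument inside an arbitrary linear quotients ordering $u_1,\dots,u_r$ of $G(I(G)^{[k]})$: assuming condition (ii) of Proposition~\ref{cochordal mindepth} holds at the only possible generator $u_m=x_1\cdots x_s$, one assigns to each leaf $v$ the minimal index $\bar v$ with $\supp(u_{\bar v})\cap W=\{v\}$ (all $\bar v<m$ by hypothesis), sets $\alpha=\max_v\bar v$, realized say by a leaf $w$ at vertex $1$ with some $j\ne 1$ of $K_s$ missing from $\supp(u_\alpha)$, and examines $(u_1,\dots,u_{\alpha-1}):u_\alpha$. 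Since this colon ideal is generated by variables and $u_{\bar z}/\gcd(u_{\bar z},u_\alpha)=x_jx_z$ for a leaf $z$ at $j$, the variable $x_j$ must lie in it ($x_z$ cannot, as $z$ is a leaf whose only neighbour $j$ is unavailable); the witnessing generator $u_\beta$ with $\beta<\alpha$ then either has leafless support (forcing $\beta=m<\alpha$, impossible) or has $w$ as its unique leaf, contradicting the minimality of $\bar w=\alpha$. Some argument of this kind is indispensable, and nothing in your outline supplies it.

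The second, more repairable gap is in (ii)$\Rightarrow$(i) for $\lfloor s/2\rfloor+1\le k<s$: condition (ii)(b) of Proposition~\ref{cochordal mindepth} requires the swapped matching $M'$ to correspond to a generator $u_j$ with $j<i$, i.e.\ to \emph{precede} $u_i$ in the linear quotients ordering, whereas you only arrange $V(M')\ne V(M)$. Since the ordering is not under your control, ``an explicit dominating $M$'' will not do. The paper instead takes $M$ to be the \emph{last}, in the given ordering, among all $k$-matchings whose vertex set strictly contains $[s]$, and checks that each swap produces another matching in that same family, hence one occurring strictly earlier. Your swaps do preserve the property $[s]\subsetneq V(M')$, so this fix is available to you, but it has to be made explicit.
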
	

\begin{proof}
	(a) and (b) can be easily proved. We prove (c). By Corollary~\ref{cochordal linear quotients}, $I(G)^{[k]}$ has linear quotients. Let $u_1,\dots ,u_r$ be a linear quotients ordering for the minimal generators of $I(G)^{[k]}$. 
	
	First suppose that $s=2k$.  Assume on the contrary that there is a dominating $k$-matching which satisfies condition (ii) of Proposition~\ref{cochordal mindepth}. Let $u_m$ be the generator corresponding to such matching. Since $s=2k$, we have $u_m=x_1\dots x_s$. Let $W$ be the set of all leaves of $G$. By assumption, for every $v\in W$, there exists $j<m$ such that $u_j | u_mx_v$. 
	For each $v\in W$, let $u_{\bar{v}}$ be the smallest generator in the linear quotients ordering whose support contains $v$ but no other leaves. In other words, we define
	\begin{equation}\label{eq: min index}
	\bar{v}=\min\{t: \mathrm{supp}(u_t)\cap W =\{v\}\}.
	\end{equation}
	Note that we have $\bar{v}<m$ for every leaf $v\in W$ by the initial assumption on $u_m$. We set
	$\alpha=\max\{\bar{v} : v\in W\}$. The support of $u_\alpha$ has exactly one leaf, say $w$. Without loss of generality, we assume that $w$ is adjacent to $1$. Since $u_\alpha$ is of degree $2k$, there is exactly one vertex of $K_s$ that is missing in the support of $u_\alpha$, say $j\neq 1$. Let $z$ be a leaf adjacent to $j$. By definition of $\alpha$, we have $\bar{z}<\alpha <m$. Observe that $u_{\bar{z}}/\gcd(u_{\bar{z}},u_\alpha)=x_jx_z$. Now we consider the ideal
	\[
	J=(u_1,\dots ,u_{\alpha-1}):u_\alpha.
	\]
	 Since $J$ is generated by variables, either $x_j$ or $x_z$ must be a generator of $J$. Since $z$ is a leaf, $x_z$ cannot be a generator of $J$. Then there exists $\beta <\alpha$ such that $u_{\beta}/\gcd(u_{\beta},u_\alpha)=x_j$. If the support of $u_\beta$ has no leaves, then $\beta=m$ which is a contradiction as $\beta <\alpha <m$. On the other hand, since the support of $u_\beta$ cannot contain multiple leaves, $w$ must be the only leaf in it. In that case, $\beta < \alpha =\bar{w}$ contradicts the definition \eqref{eq: min index} of $\bar{w}$.

	Next, suppose that $s\neq 2k$. Let $W_i=\{v_1^{(i)},v_2^{(i)},\ldots,v_{a_i}^{(i)}\}$ denote the set of  leaves attached to the vertex $i$ of $K_s$. 
	
	\medskip 
	(i)\implies (ii): If either $s$ is odd with $1\leq k\leq \lfloor s/2 \rfloor$ or $s$ is even with $1\leq k < s/2$, then for each $k$-matching $M$ of $G$ one has $[s]\not \subseteq V(M)$. 
	Since each $a_i\geq 1$, it follows that $M$ can not be a dominating $k$-matching of $G$. Hence, $\depth(S/I(G)^{[k]})>2k-1$, by Corollary~\ref{necessary condition}. 
	
	\medskip 
	(ii)\implies (i): If $k=s$, then the result follows from Corollary~\ref{nu}. Assume that $\lfloor s/2 \rfloor+1\leq k < s$. Then there is a $k$-matching $N$ of $G$ with $[s]\subsetneq V(N)$. Let $M_1,\ldots,M_q$ be those $k$-matchings of $G$ on distinct sets of vertices with $[s]\subsetneq V(M_t)$ for each $t$. 
	We may assume that $V(M_i)=\mathrm{supp}(u_{{\ell}_i})$ with $1\leq {\ell}_1<\cdots <{\ell}_q\leq r$. Letting $M=M_q$, we claim that $M$ is a $k$-matching which satisfies condition (ii) of Proposition~\ref{cochordal mindepth}. To this end, let $v\in V(G)-V(M)$, say $v=v_j^{(i)}$. Now we consider the following two cases to conclude the proof.
	
	First suppose that $a_i\geq 2$ and $\{i, v_{j'}^{(i)}\}\in M$ where $j\neq j'$. Then 
	\[
	M'=(M-\{\{i, v_{j'}^{(i)}\}\})\cup \{\{i, v_j^{(i)}\}\}
	\] 
	is a $k$-matching of $G$ and $M'=M_{q'}$ with $q'<q$ and $V(M')\subseteq V(M)\cup \{v\}$.  
	
	Next suppose that $\{i,v_{j'}^{(i)}\}\notin M$ for all $1\leq j'\leq a_i$. Then there is some $i'\neq i$ with $1\leq i' \leq s$ such that $\{i,i'\}\in M$. Also, there is some $i''\neq i,i'$ with $1\leq i'' \leq s$ such that $\{i'',v_{j''}^{(i'')}\}\in M$ for some $j''$. Then  
	\[
	M''=(M-\{\{i,i'\},\{i'', v_{j''}^{(i'')}\}\})\cup \{\{i',i''\},\{i, v_j^{(i)}\}\}
	\] 
	is  a $k$-matching of $G$ and $M''=M_{q''}$ with $q''<q$ and $V(M'')\subseteq V(M)\cup \{v\}$.
\end{proof}

Besides the given characterization in Theorem~\ref{multiwhiskered depth}, it would be also interesting to find the exact values of the normalized depth function for $1 \leq k \leq \lfloor s/2 \rfloor$. In the next example, we give a few computed cases.  
  
\begin{Example}\label{computation}
	{\em 
Our computations with CoCoA \cite{CoCoA} shows the following for the squarefree powers of edge ideals of multiple whiskered complete graphs $G$ with $a_i=1$ for all $i=1,\ldots,s$ which do not have minimum depth:
\begin{enumerate}
	\item If $s=4$, then $g_{I(G)}(1)=3$ and $g_{I(G)}(2)=1$.
	\item If $s=5$, then $g_{I(G)}(1)=4$ and $g_{I(G)}(2)=2$.
	\item If $s=6$, then $g_{I(G)}(1)=5$, $g_{I(G)}(2)=3$ and $g_{I(G)}(3)=1$.
\end{enumerate}
}
\end{Example}

\medskip
Finally, we would like to remark that in the proof of Theorem~\ref{multiwhiskered depth}, we used the fact that $a_i\geq 1$, for all $i=1,\ldots,s$. If we also allow some $a_i$'s to be equal to zero, then, using Theorem~\ref{thm: dominating clique}, we can guarantee for a specific squarefree power to have minimum depth. More precisely, let $G$ be a multiple whiskered graph with $a_i=0$ for at least one $i$ and let $k\geq 2$. If either $s=2k$ or $s=2k-1$, then $G$ has a dominating $K_{2k-1}$ clique. Therefore, by Theorem~\ref{thm: dominating clique}, it follows that $S/I(G)^{[k]}$ has minimum depth.

\section*{Acknowledgment}

J\"urgen Herzog and Sara Saeedi Madani was supported by T\"UB\.ITAK (2221-Fellowships for Visiting Scientists and Scientists on Sabbatical Leave) to visit Nursel Erey at Gebze Technical University. Takayuki Hibi was partially supported by JSPS KAKENHI 19H00637. Sara Saeedi Madani was in part supported by a grant from IPM (No. 1401130112).

\end{document}